\documentclass[centertags,12pt]{amsart}

\usepackage{amssymb}
\usepackage{latexsym}
\usepackage{amsfonts}
\usepackage{amssymb}
\usepackage{graphicx} 
\usepackage{pdflscape} 
\usepackage{diagbox} 
\usepackage{color}
\usepackage{hyperref}
\usepackage[utf8]{inputenc}


\textwidth = 16.00cm
\textheight = 22.00cm
\oddsidemargin = 0.12in
\evensidemargin = 0.12in


\setlength{\parindent}{0pt}
\setlength{\parskip}{5pt plus 2pt minus 1pt}

\makeatletter
\renewcommand{\subsection}{\@startsection
{subsection}{2}{0mm}{\baselineskip}{-0.25cm}
{\normalfont\normalsize\em}}
\makeatother


\newtheorem{theorem}{Theorem}[section]
\newtheorem{proposition}[theorem]{Proposition}
\newtheorem{corollary}[theorem]{Corollary}
\newtheorem{lemma}[theorem]{Lemma}

{\theoremstyle{definition}

\newtheorem{example}[theorem]{Example}
\newtheorem{question}[theorem]{Question}
}

{\theoremstyle{remark}
\newtheorem{remark}[theorem]{Remark}

\newtheorem{claim}{Claim}
}

\def\1{\mathbf 1}

\def\Z{\mathbf Z}

\def\N {\mathbb{N}}
\def\Z {\mathbb{Z}}

\def\a {\alpha}

\def\k {\kappa}
\def\l {\ell}



\title[A certain sequence on  pure $\k-$sparse gapsets]{A certain sequence on  pure $\k-$sparse gapsets}

\author[G. B. Almeida Filho]{Gilberto B. Almeida Filho}
\address{Universidade Federal do Mato Grosso, Campus Várzea Grande, Mato Grosso, MT, Brazil}
\email{gilberto.filho@ufmt.br}

\author[Stéfani C. Vieira]{ Stéfani C. Vieira}
\address{Universidade Federal do Mato Grosso, Campus Cuiabá, Mato Grosso, MT, Brazil}
\email{stefani.vieira1@ufmt.br}

\thanks{{\em 2020 Math. Subj. Class.}: Primary 20M14; 
Secondary 05A15, 05A19}

\thanks{{\em Keywords}: gapset, symmetric gapset, pseudo-symmetric gapset, pseudo-Frobenius number}

\sloppy

   \begin{document}

\begin{abstract} 
In this paper, we study the pure $\k-$sparse gapsets and our focus on getting information about the sequence observed in Table 3 at \cite{GM}, this sequence is listed in OEIS as A374773. We verify that the cardinality of the set of gapsets with genus $3n+1$ such that the maximum distance between two consecutive elements is $2n$ is equal to the cardinality of the set of gapsets with genus $3n+2$ such that the maximum distance between two consecutive elements is $2n+1$, for all $n\in \mathbb{N}$. In particular, we compute the cardinality of the symmetric and pseudo-symmetric gapsets in these cases.

\end{abstract}

 \maketitle

\section{Introduction}
\label{intro}

A \textit{gapset} is a finite set $G \subset \N$ satisfying the following property: Let $z \in G$ and write $z = x + y$, with $x$ and $y \in \N$, then $x \in G$ or $y \in G$. This concept was formally introduced by Eliahou and Fromentin \cite{EF}. We can associated a gapsets with set of gaps of a numerical semigroups. Recall that a numerical semigroup $S$ is a submonoid of the set of non-negative integers $\N_0$, equipped with the usual addition, where $G(S):=\N_0\setminus S$, the set of {\em gaps} of $S$, is finite. The cardinality of a gapset $G$ is called the {\em genus} of $G$. The set of all gapsets is denoted by $\Gamma$ and the set of all gapsets with a fixed genus $g$ by $\Gamma(g)$.

If $G$ is a gapset, we can use several known invariants of $G$. Among them are the multiplicity $m(G) := \min\{s \in \N: s \notin G\}$, the conductor $c(G) := \min\{s \in \N: s + n \notin G, \forall n \in \N_0\}$, the depth $q(G) := \lceil \frac{c(G)}{m(G)} \rceil$ and the Frobenius number of a gapset $F(G) := c(G) - 1$. In particular, if the genus of $G$ is $g$ we denote $F(G) $ by $\ell_g$.
Furthermore, the invariant genus implies a importante inclusion about gapsets. If $G$ is a non-empty gapset with genus $g$ then $G \subseteq [1,2g-1]$. Throughout this paper, we denote $[a,b] := \{x \in \Z: a \leq x \leq b\}$ and $[a, \infty) := \{x \in \Z: x \geq a\}$, for integers $a$ and $b$.

We denote by $G:=\{\ell_1<\ell_2<..<\ell_g\}$ a gapset with genus $g$. We say that $G$ is \textit{$\k$-sparse} if the differences between two consecutive elements of $G$ (in the natural order) is at most $\k$. If there are two consecutive elements $\l_{i}$ and $\l_{i+1} \in G$ such that $\l_{i+1} - \l_i = \k$, then we say that $G$ is a \textit{pure $\k$-sparse gapset}. By convention, we say that $\emptyset$ is a $0$-sparse gapset and $\{1\}$ is a $1$-sparse gapset. Our goal is the study of the following families of sets: Let $\mathcal{G}_{\k}$ be the set of all pure $\k$-sparse gapsets, the set of pure $\k$-sparse gapsets with genus $g$, namely $\mathcal{G}_{\k}(g) := \{G \in \Gamma(g): G \in \mathcal{G}_{\k}\}$ and its subset $\mathcal{G}_{\k}(g, q) := \{G \in \mathcal{G}_{\k}(g): q(G) = q\}$. 

We can use a notion of $m-$extension in the study about gapsets. The notion  of $m$-extension was introduced by Eliahou and Fromentin in \cite{EF}, for $m \in \N$. An $m$-extension $A \subset \N$ is a finite set containing $[1,m-1]$ that admits a partition \linebreak $A = A_0 \cup A_1 \cup \ldots \cup A_t$, for some $t \in \N_0$, where $A_0 = [1,m-1]$ and $A_{i+1} \subseteq m + A_i$ for all $i$. In particular, if $A$ is an $m$-extension, then $A \cap m\N = \emptyset$. Almeida  and Bernandini introduced more general sets and they define them as follows: let $m$ be a positive integer. We say that $M \subset \N$ is a $m$-set if $[1, m-1] \subset M$ and $M \cap m\N = \emptyset$. In particular, an $m$-extension is a $m$-set. We denote by $\mathcal{M}_\k(g)$ the set of all $m$-sets with $g$ elements, such that the maximum distance between two consecutive elements (with respect to the natural order) is $\k$ and that lies on $[1,2g - 1]$. In some cases, we also deal with subsets of $[1, 2g - 1]$ with no specific property. We denote by $\mathcal{C}_{\k}(g)$ the set of those sets that have $g$ elements and such that the maximum distance between two consecutive elements (with respect to the natural order) is $\k$. Notice that $\mathcal{G}_\k(g) \subset \mathcal{M}_\k(g) \subset \mathcal{C}_\k(g)$.

Some authors studied the set of numerical semigroups with fixed genus and some other invariants. For instance, Bernardini and Almeida  \cite{GM} studied numerical gapsets pure $\k$-sparse; Blanco and Rosales \cite{BR} studied numerical semigroups with fixed genus and fixed Frobenius number; Bras-Amor\' os \cite{Amoros3} studied numerical semigroups with fixed genus and fixed ordinarization number; Kaplan \cite{Kaplan} studied numerical semigroups with fixed genus and fixed multiplicity; all of them obtained some partial interesting results. The study of gapsets (or numerical semigroups) has been generated some interesting numerical sequences. For example, the cardinality of $\Gamma(g)$ is denoted by $n_g$ has first few elements of the  $1, 1, 2, 4, 7, 12, 23, 39, 67, 118$ and it is registered as the sequence A007323 at OEIS (the on-line encyclopedia of integer sequences). This sequence was deeply studied after Bras-Amor\' os \cite{Amoros1} conjectured some statements, among them, that ``$n_g + n_{g+1} \leq n_{g+2}$, for all $g$?'', which is still an open question. In the paper \cite{GM} which studied pure $\k-$sparse gapsets with genus $g$ satisfying the condition $2g\leq 3\k$. Under this condition, the results obtained by Almeida  and Bernanrdini generated the sequence $(g_w)$ whose first terms are: $1, 2, 5, 12, 30, 70, 167, 395, 936$ and $2212$,  registered as the sequence A348619  at OEIS.

The sequence studied in this paper was noted in the paper \cite{GM} in Table 3 which was created to illustrate the behavior of the sequence $(g_w)$, reproduced here in Table \ref{g,k}. Thus, we can see the existence of a sequence of $\k-$pure sparse gapsets with genus $g$ that do not satisfy the condition $2g\leq 3\k$. This sequence was registered as the sequence A374773 at OEIS, whose first terms are: 3, 8, 22, 54, 135, 331 and 808. We illustrate the sequence this sequence in Table \ref{tab1}.

Here is an outline of this paper. In section \ref{sec2}, we present some general properties of gapsets. In section \ref{sec3},   we study pure $(2n)-$sparse  gapsets with genus $3n+1$ and check some properties regarding the elements of these gapsets. In this case, we show some important properties about symmetric gapsets and verify the non-existence of pseudo-symmetric gapsets under these conditions. In particular, we calculate the number of symmetric gapsets. In section \ref{sec4}, we study pure $(2n+1)-$sparse gapsets with genus $3n+2$ and check some properties regarding the elements of these gapsets. In this case, we show some important properties about pseudo-symmetric gapsets and verify the non-existence of symmetric gapsets under these conditions. In particular, we calculate the number of pseudo-symmetric gapsets. In section \ref{sec5}, we construct a map $\sigma$ from the family $\mathcal{G}_{2n}(g)$ to the set of subsets of $[1,2g + 1]$. In particular, we prove that the map takes gapsets with depth at most 3 into non pseudo-ymmetric gapsets with depth at most 3 and, therefore, establishes a bijection between the set of pure $(2n)-$sparse  gapsets with genus $3n+1$ into the set of pure $(2n+1)-$sparse gapsets with genus $3n+2$. Thus, we conclude that $\#\mathcal{G}_{2n}(g) = \#\mathcal{G}_{2n+1}(g + 1)$, for all $n\in \mathbb{N}$ and $g=3n+1$. In section \ref{sec6}, we discuss some further questions related to this work.

\section{Preliminaries}\label{sec2}
We bring some familiar results on numerical semigroup theory to gapset theory and
we keep the proofs for the sake of completeness. We say that a gapset $G$ of genus $g$ is
\begin{itemize}
    \item \textit{symmetric gapset} if $F(G)=2g-1$;
    \item \textit{pseudo-symmetric gapset} if $F(G)=2g-2$.    
\end{itemize}

Let $G$ be a gapset. We say that an integer $x$ is a \textit{pseudo-Frobenius number} if $x\in G$ and $ x+s$ is a non-gap for all non-gap $s\neq 0$. We will denote by $PF(G)$ the set of pseudo-Frobenius numbers of $G$, and its cardinality is the \textit{type} of $G$, denoted by $\tau(G)$. In particular, $x\in G$ is a non pseudo-Frobenius number if exists a non-gap $s\neq 0$ such that $x+s\in G$.



The next result guarantees that every $m$-extension with depth at most 2 is a gapset.

\begin{proposition}\label{q=2}
Let $G \subseteq [1, 2m-1]$, with $[1,m-1] \subseteq G$ and $m \notin G$. Then $G$ is a gapset with multiplicity $m$ and depth at most $2$.
\label{q2}
\end{proposition}

\begin{proof}
Let $z \in G$ and write $z = x + y$, with $x \leq y$. Since $z \leq 2m-1$, then $x \leq m-1$ and we conclude that $x \in G$; hence $G$ is a gapset. Moreover, the conductor of $G$ is at most $2m - 1$; thus, $q(G) \leq 2$.
\end{proof}

\begin{proposition}
If a non-empty gapset has genus $g$ and multiplicity $m$, then $2 \leq m \leq g+1$.
\label{mg}
\end{proposition}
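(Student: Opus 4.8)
The plan is to establish the two inequalities separately, each directly from the definitions, with the genus read as $g = \#G$ and the multiplicity as $m = \min\{s \in \N : s \notin G\}$.

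For the lower bound $m \geq 2$, I would first argue that every non-empty gapset contains $1$. Let $z = \min G$, which exists because $G$ is non-empty and finite. If $z \geq 2$, then $z = 1 + (z-1)$ with $1, z-1 \in \N$, so the defining property of a gapset forces $1 \in G$ or $z-1 \in G$; the latter is impossible since $z-1 < z = \min G$, hence $1 \in G$, which contradicts $z \geq 2$. Therefore $z = 1$, i.e. $1 \in G$. Consequently $1$ is not a gap, so the smallest positive integer absent from $G$ satisfies $m \geq 2$.

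For the upper bound $m \leq g+1$, I would simply unwind the definition of $m$: being the least element of $\N$ not in $G$, it forces every positive integer strictly below $m$ to lie in $G$, that is $[1, m-1] \subseteq G$. This block contains $m-1$ elements, so $g = \#G \geq m-1$, which rearranges to $m \leq g+1$.

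I do not expect a substantial obstacle here. The only step needing care is the claim $1 \in G$: one cannot deduce it by applying the gapset axiom to an arbitrary $z \in G$ (that only yields the disjunction $1 \in G$ or $z-1 \in G$), so the argument must be applied to the minimal element $\min G$, where the second alternative is ruled out by minimality. Once $1 \in G$ is secured, both bounds follow by the elementary counting of the forced initial segment $[1, m-1]$.
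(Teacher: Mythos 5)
Your proof is correct and follows essentially the same route as the paper: the upper bound comes from the forced initial segment $[1,m-1] \subseteq G$ (the paper phrases this contrapositively, assuming $m \geq g+2$ to get $\#G \geq g+1$), and the lower bound from $1 \in G$. The only difference is that you carefully justify $1 \in G$ via the minimal-element argument, a step the paper simply asserts from $G \neq \emptyset$.
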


\begin{proof}
If $m \geq g+2$, then $[1,g+1] \subseteq G$ and $\# G \geq g+1$, which does not occur. Hence $m \leq g+1$. On the other hand, since $G \neq \emptyset$, then $1 \in G$ and $m \geq 2$.
\end{proof}

\begin{example}
Let $G = \{\l_1 < \l_2 < \cdots < \l_g\}$ be a pure $1$-sparse gapset. Then $\l_{i+1} - \l_i = 1$, for all $i$. Hence, $G = \{1, 2, \ldots, g\}$ is an ordinary gapset. Note that this is the only pure $1-$sparse gapset.
\end{example}

Now we obtain a relation between the multiplicity of a gapset and the maximum distance between to consecutive elements.

\begin{proposition}
Let $G$ be a pure $\k$-sparse gapset with multiplicity $m$. Then $\k \leq m$.
\label{km}
\end{proposition}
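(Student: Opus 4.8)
The plan is to argue by contradiction, exploiting the defining property of a gapset together with the fact that the multiplicity $m$ is, by definition, the smallest positive integer \emph{not} lying in $G$. Suppose, for contradiction, that $\k > m$, that is $\k \geq m+1$. Since $G$ is pure $\k$-sparse, there exist two consecutive elements $\l_i < \l_{i+1}$ of $G$ with $\l_{i+1} - \l_i = \k$. The strategy is to decompose the larger gap $\l_{i+1}$ as a sum of two positive integers, neither of which lies in $G$, directly contradicting the gapset axiom.

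The natural decomposition to try is $\l_{i+1} = m + (\l_{i+1} - m)$. The first summand $m$ is not in $G$ by the definition of multiplicity. For the second summand, I would verify that $\l_{i+1} - m$ falls strictly between the consecutive elements $\l_i$ and $\l_{i+1}$: on one hand $\l_{i+1} - m < \l_{i+1}$ since $m \geq 1$; on the other hand $\l_{i+1} - m > \l_i$ is equivalent to $\l_{i+1} - \l_i > m$, which is exactly the assumption $\k > m$. Because $\l_i$ and $\l_{i+1}$ are consecutive in $G$, no element of $G$ lies strictly between them, so $\l_{i+1} - m \notin G$. Both summands being positive integers outside $G$ contradicts the gapset property applied to $z = \l_{i+1} \in G$. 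Hence $\k \leq m$.

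This argument is short, and I do not expect a serious obstacle; the only point requiring care is the choice of decomposition. Writing $\l_{i+1} = m + (\l_{i+1} - m)$ is precisely what forces one summand to be the smallest non-element $m$ while placing the other summand inside the forbidden open interval $(\l_i, \l_{i+1})$, so that a less careful splitting would not guarantee both summands avoid $G$. Finally, I would note that the degenerate cases are covered by the stated conventions: for $\k \geq 2$ purity yields two genuinely distinct consecutive elements, so the argument applies verbatim, while $G = \emptyset$ and $G = \{1\}$ are declared $0$-sparse and $1$-sparse respectively, for which the inequality $\k \leq m$ holds trivially.
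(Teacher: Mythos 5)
Your proof is correct, and it is a streamlined variant of the paper's argument rather than a copy of it. Both proofs run the same contradiction: assuming $\k > m$, write the upper endpoint $\l_{i+1}$ of the long gap as a sum of two positive integers neither of which lies in $G$, violating the gapset axiom. The difference is in the decomposition. The paper writes $\l_{i+1} = (\l_{i+1} - am) + am$, choosing $a \in \Z$ so that $\l_i + 1 \leq \l_{i+1} - am \leq \l_i + m$; this requires knowing that \emph{every} positive multiple $am$ of the multiplicity is outside $G$, a fact the paper uses silently (it follows from the gapset axiom by induction, since $am = m + (a-1)m$), and the printed proof moreover contains a typo, displaying $(\l_i - am) + am$ where $(\l_{i+1} - am) + am$ is meant. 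Your choice $a = 1$, i.e.\ $\l_{i+1} = m + (\l_{i+1} - m)$, sidesteps all of this: you need only the definitional fact $m \notin G$, and the inequality $\k > m$ places $\l_{i+1} - m$ directly in the open interval $(\l_i, \l_{i+1})$, which meets $G$ trivially. The paper's normalization of the cofactor into the window $[\l_i+1, \l_i+m]$ is stronger than what this proposition needs (it is the kind of reduction that powers results like Proposition \ref{abaixo}), whereas your version is shorter and self-contained; your explicit treatment of the degenerate conventions for $\emptyset$ and $\{1\}$ is a point of care the paper omits.
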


\begin{proof}
Let $\l_i$ and $\l_{i+1} \in G$ such that $\l_{i+1} - \l_i = \k$. If $\k > m$, then $\l_i + 1, \l_i + 2, \ldots, \l_i + m \notin G$ ($\l_i + m < l_{i+1}$). There is $a \in \Z$ such that $\l_i + 1 \leq \l_{i+1} - am \leq \l_i + m$.  Notice that $G \ni \l_{i+1} = (\l_i - am) + am$. But $\l_i - am \notin G$ and $am \notin G$, which leads to a contradiction.
\end{proof}

\begin{proposition}
Let $G = \{\l_1 < \l_2 < \ldots < \l_g\}$ be gapset with multiplicity $m$ and genus $g$. Then $[am + \l_j + 1, am + \l_{j+1} - 1] \cap G = \emptyset$ for all $a \in \N_0$ and for all $j \in [1,g-1]$.
\label{gilberto_partition}
\end{proposition}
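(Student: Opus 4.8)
The plan is to exploit the fact that the defining property of a gapset is precisely the statement that its complement is closed under addition, and to reduce the entire claim to that closure. Concretely, I would set $S := \N_0 \setminus G$. If $u, v \in S$ are positive and their sum $u+v$ were a gap, then the gapset condition applied to $z = u+v$ would force $u \in G$ or $v \in G$, a contradiction; since moreover $0 \in S$ (because $G \subset \N$), the set $S$ is a numerical semigroup. Every assertion about non-gaps below will then be an immediate consequence of this closure.

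First I would record that the open interval between two consecutive gaps contains no gap at all: because $\l_j$ and $\l_{j+1}$ are consecutive elements of $G$, every $x \in [\l_j + 1, \l_{j+1} - 1]$ satisfies $x \notin G$, that is, $x \in S$. When $\l_{j+1} = \l_j + 1$ this interval is empty and the assertion is vacuous, so I may assume it is nonempty. Next, since $m = m(G)$ is by definition the least positive non-gap, we have $m \in S$, and closure of $S$ under addition yields $am \in S$ for every $a \in \N_0$, the case $a = 0$ being covered by $0 \in S$.

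Combining these two observations, for any $a \in \N_0$ and any $x \in [\l_j + 1, \l_{j+1} - 1]$ both $am$ and $x$ lie in $S$, hence $am + x \in S$, i.e. $am + x \notin G$. Letting $x$ range over the whole interval shows that the translate $[am + \l_j + 1, am + \l_{j+1} - 1]$ meets $G$ only in the empty set, which is exactly the claim.

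I do not expect a serious obstacle here; the only points requiring care are the direction in which the gapset axiom is invoked and the verification that the multiples $am$ are genuinely non-gaps. If one prefers to avoid naming the complementary semigroup, the statement $am \notin G$ can instead be established directly by induction on $a$: the base case $m \notin G$ is the minimality of the multiplicity, and if $am \in G$ for some $a \geq 2$ then writing $am = m + (a-1)m$ and applying the gapset property gives $m \in G$ or $(a-1)m \in G$, contradicting the inductive hypothesis. Either route closes the argument.
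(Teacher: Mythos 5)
Your proof is correct. One point of comparison worth noting: the paper itself states Proposition~\ref{gilberto_partition} \emph{without} proof (it is imported from \cite{GM}), so there is no in-paper argument to measure yours against; but your route is the natural one and it is complete. You correctly observe that the gapset axiom, as stated in the introduction (every decomposition $z = x+y$ with $x,y \in \N$ of a gap $z$ has a gap summand), is exactly the statement that $S = \N_0 \setminus G$ is closed under addition, whence $am \in S$ for all $a \in \N_0$ and $am + x \in S$ for every non-gap $x$; since each $x \in [\l_j+1, \l_{j+1}-1]$ is a non-gap (consecutiveness of $\l_j, \l_{j+1}$ in $G$), the translated interval avoids $G$. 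You also handle the degenerate cases ($\l_{j+1} = \l_j + 1$ giving an empty interval, $a=0$ via $0 \in S$), and your closing remark is apt: one can even skip naming $S$ entirely, since $am + x \in G$ would directly force $am \in G$ or $x \in G$ by a single application of the gapset axiom, with $am \notin G$ supplied by your induction on $a$. No gaps.
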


The next result is important for the proofs some results of this paper. It confirms that the Frobenius number of a gapset cannot be too big with respect to its
element $\ell_\a$.

\begin{proposition}
Let $G = \{\l_1 < \l_2 < \cdots < \l_\a < \l_{\a+1} = \l_\a + \k < \cdots < \l_g\}$ be a pure $\k$-sparse gapset with genus $g$. Then $\l_g \leq \l_\a + m$.
\label{abaixo}
\end{proposition}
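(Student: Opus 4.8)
The plan is to argue by contradiction and exploit that the Frobenius number is pinned down by the \emph{last} maximal jump. Here it is essential to read $\a$ as the largest index with $\l_{\a+1}-\l_\a=\k$, so that every jump above $\l_\a$ satisfies $\l_{i+1}-\l_i\le\k-1$ for $\a<i\le g-1$; this is the reading under which the bound actually holds (an earlier maximal jump can violate it) and is the one relevant to the applications. So I would assume, toward a contradiction, that $\l_g>\l_\a+m$, equivalently $\l_g-m>\l_\a$, and then manufacture a jump of size $\k$ strictly above $\l_\a$, contradicting the maximality of $\a$.

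First I would record the descent property built into the gapset axiom: since $\l_g\in G$ while $m\notin G$, writing $\l_g=(\l_g-m)+m$ forces $\l_g-m\in G$. Thus $\l_g-m$ is a gap lying strictly above $\l_\a$. Because $\l_\a$ and $\l_{\a+1}=\l_\a+\k$ are consecutive elements of $G$, the integers $\l_\a+1,\dots,\l_\a+\k-1$ are all non-gaps, so every gap exceeding $\l_\a$ is at least $\l_{\a+1}$. Hence $\l_g-m\ge\l_\a+\k$, giving the auxiliary bound $\l_g\ge\l_\a+m+\k$ that the key step will use.

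The crux is then to translate the maximal non-gap block upward. Applying Proposition~\ref{gilberto_partition} with $j=\a$ and $a=1$ shows that $\l_\a+m+1,\dots,\l_\a+m+\k-1$ are all non-gaps. I would next claim that $\l_\a+m\in G$: if instead $\l_\a+m$ were a non-gap, then $\l_\a+m,\l_\a+m+1,\dots,\l_\a+m+\k-1$ would be $\k$ consecutive non-gaps; since $\l_g\ge\l_\a+m+\k$ there is a gap beyond this block while $\l_\a$ is a gap below it, so the two consecutive elements of $G$ bracketing the block would differ by at least $\k+1$, contradicting $\k$-sparseness. Therefore $\l_\a+m$ is a gap, and it is immediately followed by the $\k-1$ non-gaps $\l_\a+m+1,\dots,\l_\a+m+\k-1$; as $\l_\a+m<\l_g$ it has a successor in $G$, necessarily at distance $\ge\k$, hence exactly $\k$. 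This is a jump of size $\k$ at the position of $\l_\a+m>\l_\a$, contradicting the maximality of $\a$ and yielding $\l_g\le\l_\a+m$.

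The main obstacle is precisely the claim that $\l_\a+m$ is itself a gap; this is where \emph{purity} is indispensable, since it guarantees that $[\l_\a+1,\l_\a+\k-1]$ is a \emph{longest} non-gap block, which is what forbids extending the translated block downward without creating an over-long jump. The two engines are the descent property and Proposition~\ref{gilberto_partition}; everything else is bookkeeping about consecutive gaps, and I would double-check only the edge cases where $\l_\a+m$ or its successor coincides with $\l_g$.
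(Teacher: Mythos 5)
Your proof is correct, and while it runs on the same three engines as the paper's argument --- contradiction, Proposition~\ref{gilberto_partition}, and $\k$-sparseness plus maximality of $\a$ to force and then forbid a jump of size exactly $\k$ above $\l_\a$ --- the localization is recognizably different. The paper takes $r$ to be the smallest index with $\l_r > \l_\a + m$, uses Proposition~\ref{gilberto_partition} to push $\l_r \geq \l_{\a+1} + m$, so that $\l_r - \l_{r-1} = \k$, and then squeezes $r = \a+1$, which is impossible since $\l_{\a+1} = \l_\a + \k \leq \l_\a + m$ by Proposition~\ref{km}; crucially, it never needs to decide whether $\l_\a + m$ is a gap, since $\l_{r-1}$ may be any gap at most $\l_\a + m$. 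You instead pin down $\l_\a + m$ itself as a gap, which costs two extra steps the paper avoids: the descent observation $\l_g - m \in G$ (needed to guarantee a gap at or beyond $\l_\a + m + \k$, without which your ``$\k$ consecutive non-gaps flanked by gaps'' argument has no upper gap to bracket the block) and the sparseness argument ruling out $\l_\a + m \notin G$; both are carried out correctly, and the final jump at $\l_\a + m > \l_\a$ contradicts maximality directly. What your route buys: it is self-contained down to the gapset axiom, and it makes explicit --- correctly --- that $\a$ must be read as $\maxx\{i : \l_{i+1} - \l_i = \k\}$, a hypothesis the paper's statement leaves implicit but genuinely needs (for the hyperelliptic gapset $\{1,3,5,7,9\}$ the bound fails at a non-maximal $\k$-jump). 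What the paper's route buys is brevity. One terminological slip on your side: the ``$\leq \k$'' bound you repeatedly invoke is $\k$-sparseness, not purity; purity only supplies the existence of the jump at $\a$, which here is already part of the hypothesis.
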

\begin{proof}
Suppose that $\l_g > \l_\a + m$ and let $r$ be the smallest index such that $\l_r > \l_\a + m$. By Proposition \ref{gilberto_partition}, we conclude that $\l_r \geq \l_{\a+1} + m$ and thus $\l_r - \l_{r-1} \geq \k$. Since $G$ is a pure $\k$-sparse gapset, we have $\l_r - \l_{r-1} = \k$. Notice that $r \geq \alpha + 1$. In fact, we know that $\l_j \leq \l_{\a} < \l_{\a} + m$, for all $j \in [1,\a]$. By the definition of $\a$, we know that $\a$ is the greatest index such that $\l_{i+1} - \l_i = \k$ and thus $r \leq \a +1$. Hence, the only possibility is $r = \a + 1$. However $\l_{\a+1} = \l_\a + \k \leq \l_\a + m$ and we obtain a contradiction. Hence, $\l_g \leq \l_\a + m$.
\end{proof}


The next results are important for the proofs of the main results of this paper. From [Corollary 4.11, \cite{Rosales}] and [Corollary 4.11, \cite{Rosales}]  the following two results holds.

\begin{theorem}\label{Corollary 4.11}
Let G be a numerical semigroup. The following conditions are equivalent. 
\begin{enumerate}
    \item  G is symmetric.
    \item $PF(G) = \{F(G)\}$.
    
\end{enumerate}

\end{theorem}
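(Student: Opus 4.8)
The plan is to prove the two implications separately after recording two auxiliary facts. Throughout write $F := F(G)$, let $g$ be the genus of $G$, and introduce on $\Z$ the relation $\preceq$ defined by $a \preceq b \iff b - a \in G$. First I would observe that $F \in PF(G)$ for \emph{every} numerical semigroup: $F \notin G$, and for each $s \in G \setminus \{0\}$ one has $F + s > F$, hence $F + s \in G$; thus $PF(G) \supseteq \{F\}$ unconditionally. Next I would record the reformulation of symmetry: $G$ is symmetric (that is, $F = 2g - 1$) if and only if for every $z \in \Z$ one has $z \in G \iff F - z \notin G$. The implication $z \in G \Rightarrow F - z \notin G$ holds for any numerical semigroup, since $z, F - z \in G$ would force $F = z + (F - z) \in G$, which is impossible; restricting the involution $z \mapsto F - z$ to $[0,F]$, this gives an injection from the $F + 1 - g$ non-gaps into the $g$ gaps, and the reverse implication also holds (so $G$ is symmetric) exactly when $F + 1 - g = g$, i.e. $F = 2g - 1$.

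For ($1 \Rightarrow 2$), assume $G$ is symmetric and take $x \in PF(G)$ with $x \neq F$. Then $x$ is a gap, so $x \leq F$, and since $x \neq F$ we have $x < F$, giving $F - x \geq 1$. By the reformulation above, $F - x \in G \setminus \{0\}$, so the defining property of a pseudo-Frobenius number yields $x + (F - x) = F \in G$, contradicting $F \notin G$. Hence $PF(G) = \{F\}$.

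For ($2 \Rightarrow 1$) the crux is the claim that every gap lies below some pseudo-Frobenius number: for each $x \notin G$ there exists $y \in PF(G)$ with $x \preceq y$. I would prove this by choosing $y$ to be the largest (in the usual order) gap satisfying $y - x \in G$---such gaps exist, e.g. $x$ itself since $0 \in G$, and they are bounded above by $F$---and then verifying $y \in PF(G)$: if some $s \in G \setminus \{0\}$ had $y + s \notin G$, then $y + s$ would be a strictly larger gap with $(y + s) - x = (y - x) + s \in G$, contradicting the maximality of $y$. Granting this claim, if $PF(G) = \{F\}$ then every gap $x$ satisfies $x \preceq F$, i.e. $F - x \in G$; together with the always-valid implication $z \in G \Rightarrow F - z \notin G$, the reformulation holds and $G$ is symmetric. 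The main obstacle is exactly this last claim, namely confirming that the chosen $\preceq$-maximal gap really is a pseudo-Frobenius number; once it is in hand, the remainder is routine bookkeeping with the symmetry reformulation.
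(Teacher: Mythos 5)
Your proof is correct, but there is nothing in the paper to compare it against: the paper does not prove this theorem at all, importing it verbatim from Rosales and Garc\'ia-S\'anchez, \emph{Numerical Semigroups} (their Corollary 4.11). What you have written is a complete, self-contained derivation along the standard lines of that book's treatment. Your key claim in ($2 \Rightarrow 1$) --- that every gap $x$ admits $y \in PF(G)$ with $y - x \in G$, proved by taking $y$ maximal in the usual order among gaps satisfying $y - x \in G$ --- is precisely the textbook characterization of $PF(G)$ as the set of gaps maximal with respect to the relation $a \preceq b \iff b - a \in G$, and your counting argument via the involution $z \mapsto F - z$ on $[0,F]$ (injecting the $F+1-g$ non-gaps into the $g$ gaps, with surjectivity, hence the reverse implication, exactly when $F = 2g-1$) is the standard reformulation of symmetry. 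The individual steps all check out: $F \in PF(G)$ holds unconditionally since everything above $F$ is a non-gap; in ($1 \Rightarrow 2$), a pseudo-Frobenius gap $x < F$ would give $F - x \in G \setminus \{0\}$ by symmetry and hence $F = x + (F-x) \in G$, a contradiction; and the maximality verification in ($2 \Rightarrow 1$) is sound because $y + s$ with $s \in G \setminus \{0\}$ is a strictly larger gap still satisfying $(y+s) - x \in G$. In short, your version supplies a proof where the authors chose only to cite, which makes the statement self-contained; this is a strengthening rather than a deviation.
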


\begin{theorem}\label{Corollary 4.16}
 Let S be a numerical semigroup. The following conditions are
equivalent. 
\begin{enumerate}
 \item  G is pseudo-symmetric.
    \item $PF(G) = \{F(G), \frac{F(G)}{2}\}$.\end{enumerate}
\end{theorem}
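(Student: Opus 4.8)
The plan is to prove the equivalence by first recasting pseudo-symmetry in a more usable ``symmetric pairing'' form and then reading off the pseudo-Frobenius numbers from it. Write $F := F(S)$ for the Frobenius number and $g$ for the genus, so that by definition $S$ is pseudo-symmetric exactly when $F = 2g - 2$. First I would record the elementary but crucial observation that whenever $F$ is even, $F/2$ is necessarily a gap: indeed $F/2 + F/2 = F \notin S$, so $F/2 \in S$ would force $F \in S$. Next I would analyze the involution $x \mapsto F - x$ on $\{0, 1, \dots, F\}$. Since $x + (F - x) = F \notin S$, no pair $\{x, F - x\}$ can consist of two non-gaps, so each pair with $x \neq F - x$ contributes either one or two gaps. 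Counting the $g$ gaps through these pairs, together with the fixed point $F/2$ (always a gap when $F$ is even), gives $g = k + F/2 + 1$ where $k$ is the number of pairs contributing two gaps; hence $F = 2g - 2$ if and only if $F$ is even and $k = 0$, i.e.\ if and only if $F$ is even and $F - x \in S$ for every gap $x \neq F/2$. This is the working form of pseudo-symmetry I would use throughout.

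For the implication (1) $\Rightarrow$ (2), I would argue as follows. The Frobenius number $F$ is always a pseudo-Frobenius number, since $F$ is a gap and $F + s > F$ lies in $S$ for every $s \in S \setminus \{0\}$. To see $F/2 \in PF(S)$, note $F/2$ is a gap; if some $s \in S \setminus \{0\}$ had $F/2 + s \notin S$, then $F/2 + s$ would be a gap different from $F/2$, so by the pairing form $F - (F/2 + s) = F/2 - s \in S$ (here $F/2 + s \leq F$ forces $s \leq F/2$, so $F/2 - s \geq 0$); but then $F/2 = s + (F/2 - s) \in S$, a contradiction. Finally, if $x \in PF(S)$ with $x \neq F$, then $x$ is a gap, and unless $x = F/2$ the pairing form gives $F - x \in S \setminus \{0\}$, whence $x + (F - x) = F \in S$, contradicting that $F$ is a gap. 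Hence $x = F/2$ and $PF(S) = \{F, F/2\}$.

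For the converse (2) $\Rightarrow$ (1), the key tool I would establish first is the standard lemma that every gap lies below some pseudo-Frobenius number in the order $a \leq_S b \iff b - a \in S$: given a gap $y$, choose a $\leq_S$-maximal gap $f$ among the finitely many gaps $z$ with $z - y \in S$; maximality forces $f + s \in S$ for all $s \in S \setminus \{0\}$, so $f \in PF(S)$ and $f - y \in S$. Assuming $PF(S) = \{F, F/2\}$ (so $F/2$ is a gap and $F$ is even), I would fix a gap $x \neq F/2$ and show $F - x \in S$. Suppose not; then we may take $0 < x < F$, and $F - x$ is also a gap. Applying the lemma to $x$: since $F - x \notin S$ rules out $f = F$, we must have $F/2 - x \in S$, giving $x < F/2$. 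Applying the lemma to $F - x$: since $x \notin S$ rules out $f = F$, we must have $x - F/2 \in S$, giving $x > F/2$ — a contradiction. Therefore $F - x \in S$ for every gap $x \neq F/2$, which is exactly the pairing form of pseudo-symmetry.

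I expect the main obstacle to be the converse direction, specifically the ``every gap lies below a pseudo-Frobenius number'' lemma and the careful bookkeeping of ranges and signs (ensuring $F - x \geq 0$ for gaps $x$, that $F/2 + s \leq F$, and that the excluded case $x = F/2$ does not sneak back in). The observation that $F/2 \notin S$ whenever $F$ is even is used in both directions and is indispensable, so I would isolate it at the very start.
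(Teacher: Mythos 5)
Your proof is correct, and there is in fact no internal proof to compare it against: the paper imports Theorem \ref{Corollary 4.16} directly from \cite{Rosales} (the text cites ``Corollary 4.11'' twice, evidently meaning Corollaries 4.11 and 4.16 of that book), so your argument serves as a complete, self-contained replacement for the citation. Your two ingredients are exactly the right ones, and each checks out. First, the count of the $g$ gaps along the involution $x \mapsto F-x$ of $[0,F]$ correctly converts the paper's definition of pseudo-symmetry ($F = 2g-2$) into the pairing form ``$F$ even and $F-x \in S$ for every gap $x \neq F/2$'': since no pair $\{x, F-x\}$ can consist of two non-gaps, your bookkeeping $g = F/2 + k + 1$ (with $k$ the number of two-gap pairs) is accurate, and it yields for free that an odd Frobenius number is incompatible with $F = 2g-2$ --- a characterization that \cite{Rosales} reaches instead through its machinery of irreducible numerical semigroups, so your counting route is the more elementary of the two. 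Second, for (2)$\Rightarrow$(1), your domination lemma --- every gap $y$ admits $f \in PF(S)$ with $f - y \in S$, obtained by taking a $\leq_S$-maximal element of the finite nonempty set of gaps $z$ with $z - y \in S$ --- is the standard fact that $PF(S)$ consists of the $\leq_S$-maximal gaps, and the double application to $x$ and to $F-x$, producing the incompatible conclusions $x < F/2$ and $x > F/2$, is exactly right. The range checks in (1)$\Rightarrow$(2) are also where the care is needed and you handle them properly: $s \geq 1$ ensures $F/2 + s \neq F/2$, and $F/2 + s \leq F$ ensures $F/2 - s \geq 0$. The only points left tacit are harmless and could be stated in one line: the hypothesis $PF(S) = \{F, F/2\}$ already forces $F$ even and $F \neq 0$ (so $F \neq F/2$), and the case $x = F$ in the pairing form is covered by $F - F = 0 \in S$, both of which your argument implicitly respects.
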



    
    

    

    
    

    
    

    



\begin{proposition}[ Corollary 2, \cite{GM} ]\label{posicao do salto}
    Let $G = \{\ell_1<\ell_2<\cdots<\ell_\alpha<\cdots< \ell_g \}$ be a pure
$k-$sparse gapset with genus g, where $\alpha = max\{i : \ell_{i+1} - \ell_i = k\}$, and consider its canonical partition $G_0 \cup G_1 \cup \cdots\cup G_{q-1}$. Then one of the following occurs:
\begin{itemize}
    \item $\ell_{\alpha},\ell_{\alpha+1}\in G_{q-2}$
    \item $\ell_{\alpha},\ell_{\alpha+1}\in G_{q-1}$
    \item $\ell_{\alpha}\in G_{q-2}$ and $\ell_{\alpha+1}\in G_{q-1}$
\end{itemize}
\end{proposition}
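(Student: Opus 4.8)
The plan is to locate each of the two elements $\ell_\alpha$ and $\ell_{\alpha+1}$ inside the canonical partition $G_0 \cup G_1 \cup \cdots \cup G_{q-1}$, where I take $G_i := G \cap [im+1,(i+1)m-1]$ for $i = 0,1,\ldots,q-1$; since $m \notin G$ forces every multiple of $m$ to be a non-gap, these blocks genuinely partition $G$. The entire argument rests on Proposition \ref{abaixo}, which bounds $\ell_g \leq \ell_\alpha + m$, so the real content is bookkeeping: pin down the block containing $\ell_g$ and propagate that bound downward to $\ell_\alpha$ and $\ell_{\alpha+1}$.

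First I would show that $\ell_g \in G_{q-1}$. Because $F(G) = c(G) - 1 = \ell_g$ and $q = \lceil c(G)/m \rceil$, the defining inequalities $(q-1)m < c(G) \leq qm$ translate into $(q-1)m \leq \ell_g \leq qm - 1$. Since $\ell_g$ is a gap while $(q-1)m$ is a non-gap, the lower bound is strict, hence $\ell_g \in [(q-1)m+1,\,qm-1]$, that is $\ell_g \in G_{q-1}$.

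Next I would apply Proposition \ref{abaixo} to obtain $\ell_\alpha \geq \ell_g - m \geq (q-1)m + 1 - m = (q-2)m + 1$, so $\ell_\alpha$ lies in a block of index at least $q-2$; since also $\ell_\alpha < \ell_g \leq qm - 1$, its block index is at most $q-1$, and therefore $\ell_\alpha \in G_{q-2} \cup G_{q-1}$ (no multiple of $m$ issue arises, as $\ell_\alpha$ is a gap). The same interval captures $\ell_{\alpha+1}$: from $\ell_{\alpha+1} > \ell_\alpha \geq (q-2)m+1$ its index is at least $q-2$, while $\ell_{\alpha+1} \leq \ell_g \leq qm - 1$ forces index at most $q-1$, so $\ell_{\alpha+1} \in G_{q-2} \cup G_{q-1}$ as well.

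Finally, since block indices respect the natural order and $\ell_\alpha < \ell_{\alpha+1}$, the index of $\ell_\alpha$ cannot exceed that of $\ell_{\alpha+1}$; enumerating the admissible index pairs $(q-2,q-2)$, $(q-2,q-1)$, and $(q-1,q-1)$ yields exactly the three listed alternatives. The one point requiring care is the degenerate case $q = 1$, which occurs only when $k = 1$ (the ordinary gapset $\{1,\ldots,g\}$), where $G_{q-2}$ is empty and only the middle alternative survives; for every pure $k$-sparse gapset with $k \geq 2$ a jump of size at least $2$ rules out depth $1$, so $q \geq 2$ and the trichotomy is non-degenerate. I expect no serious obstacle beyond this edge-case check and the precise placement of $\ell_g$ in the top block, since Proposition \ref{abaixo} already supplies the single nontrivial inequality.
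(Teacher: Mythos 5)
Your proof is correct and takes essentially the same route as the paper's: both arguments place $\ell_g$ in $G_{q-1}$ and use Proposition \ref{abaixo} (i.e., $\ell_g \leq \ell_\alpha + m$) to force $\ell_\alpha$, and then $\ell_{\alpha+1}$, into $G_{q-2} \cup G_{q-1}$, with the natural order ruling out the pair $(q-1,q-2)$ and leaving exactly the three listed alternatives. Your write-up is in fact more detailed than the paper's three-line proof, since you make explicit the block arithmetic placing $\ell_g$ in the top block, the fact that gaps avoid multiples of $m$, and the degenerate case $q=1$, none of which causes any difficulty.
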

\begin{proof}
From Proposition \ref{abaixo}, we conclude that $\l_\a \in G_{q-2}$ or  $\l_\a \in G_{q-1}$, because $\l_g \in G_{q-1}$. For the first case, both possibilities can occur: $\l_{\a+1} \in G_{q-2}$ or $\l_{\a+1} \in G_{q-1}$. For the second case, we must have $\l_{\a+1} \in G_{q-1}$ and we are done.
\end{proof}

\begin{theorem}\label{cota para tipo de G}
 Let $G $ be  gapset with genus g, multiplicity $m$ and $q$, then  $ G_{q-1}\subset PF(G) $. In partcular, $ \# G_{q-1}\leq \tau(G)$.
\end{theorem}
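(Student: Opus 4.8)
The plan is to establish the stronger set inclusion $G_{q-1} \subseteq PF(G)$ directly; the cardinality bound $\# G_{q-1} \le \tau(G)$ is then immediate, since $\tau(G) = \# PF(G)$. First I would record what the depth $q$ tells us about the magnitude of the gaps. Writing the canonical partition as $G_i = G \cap [im+1,(i+1)m-1]$ (no gap can land on a multiple of $m$, since multiples of $m$ are non-gaps), the hypothesis that $G_{q-1}$ is the top nonempty block means $G \subseteq [1, qm-1]$. In particular the Frobenius number obeys $\ell_g \le qm-1$.

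Next I would fix an arbitrary $x \in G_{q-1}$ and check the two defining conditions of a pseudo-Frobenius number. The first, $x \in G$, is automatic because $G_{q-1} \subseteq G$. For the second, let $s$ be any non-gap with $s \neq 0$. Since $m$ is the multiplicity, i.e. the least positive non-gap, every such $s$ satisfies $s \ge m$; combined with $x \ge (q-1)m + 1$ (because $x$ lies in the block $[(q-1)m+1, qm-1]$) this gives
\[
x + s \;\ge\; (q-1)m + 1 + m \;=\; qm + 1 \;>\; qm - 1 \;\ge\; \ell_g .
\]
As every integer strictly greater than $\ell_g$ is a non-gap, $x+s$ is a non-gap. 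Since $s$ was an arbitrary positive non-gap, $x$ satisfies both conditions, and hence $x \in PF(G)$.

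This yields $G_{q-1} \subseteq PF(G)$, and passing to cardinalities gives $\# G_{q-1} \le \tau(G)$. I do not expect a genuine obstacle here: the whole argument is a single length estimate. The only points needing care are the bookkeeping that multiples of $m$ are never gaps (so that the blocks $G_i$ are well defined and $G_{q-1}$ genuinely begins at $(q-1)m+1$), and the observation that it suffices to test the inequality against the smallest positive non-gap $s = m$, because any larger non-gap only pushes $x+s$ further beyond $\ell_g$.
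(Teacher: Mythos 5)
Your proposal is correct and follows essentially the same route as the paper's own proof: both fix $x \in G_{q-1} \subseteq [(q-1)m+1, qm-1]$, use $s \ge m$ for every nonzero non-gap, and conclude $x+s \ge qm+1 > \ell_g$, so $x+s$ is a non-gap and $x \in PF(G)$. Your write-up is in fact slightly more careful than the paper's, since you make explicit the bound $\ell_g \le qm-1$ that justifies the final step.
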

\begin{proof}
For  $x\in G_{q-1}$ we have $x\geq (q-1)m+1$, since $G_{q-1}\subset [(q-1)m+1, qm-1] $ and $G=G_0 \cup G_1 \cup \cdots\cup G_{q-1}$. Given $s\neq 0$ a non-gap we obtain that $x+s\geq (q-1)m+1 +m=qm+1$. Then, $x+s$ is a non-gap for all non-gap $s\neq 0$, i.e., $x\in PF(G)$.

\end{proof}

\section{On pure $(2n)-$sparse  gapsets with genus $3n+1$  }\label{sec3}

In this section we let consider the family of the pure $\k-$sparse gapsets with genus $g$ with $g=3n+1$ and $\k=2n$, for all $n\in \mathbb{N}$, denoted by $ \mathcal{G}_{2n}(g)$. More precisely, we  study the structure of this gapsets and some properties. In particular, we calculate the quantity of the symmetric gapsets in $ \mathcal{G}_{2n}(g)$.


Throughout the section,   $n$ denote a  positive integer and $g=3n+1$.

\begin{lemma}Let $n$ be a  positive integer and $g=3n+1$.
    The family $ \mathcal{G}_{2n}(g)$ has a hyperelliptic gapset if, and only if, $n=1$. In particular, $G=\{1,3,5,7\}\in  \mathcal{G}_{2}(4)$.
\end{lemma}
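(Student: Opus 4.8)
The plan is to reduce the whole statement to a single invariant, the multiplicity, and then apply Proposition~\ref{km}. First I would recall that a gapset is \emph{hyperelliptic} exactly when its multiplicity equals $2$. Since $2$ being a non-gap forces every positive even integer to be a non-gap, for each fixed genus $g$ there is a unique hyperelliptic gapset, namely the set $H=\{1,3,5,\ldots,2g-1\}$ of all odd numbers in $[1,2g-1]$. Establishing this normalization carefully is the conceptual core of the argument, especially since the term ``hyperelliptic'' is not recalled earlier in the paper.

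Next, for $g=3n+1$ I would observe that $H=\{1,3,\ldots,6n+1\}$ has all consecutive differences equal to $2$, so its maximum distance between two consecutive elements is $2$ and $H$ is pure $2$-sparse. For the forward implication I would suppose $H\in\mathcal{G}_{2n}(g)$, so that $H$ is pure $(2n)$-sparse with multiplicity $m=2$; Proposition~\ref{km} then gives $2n\le m=2$, and since $n$ is a positive integer this forces $n=1$. Equivalently, the maximum consecutive distance of a pure $(2n)$-sparse gapset is $2n$, which equals the value $2$ attained by $H$ only when $n=1$.

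For the converse and the displayed example, I would set $n=1$, $g=4$, and $H=\{1,3,5,7\}$. This $H$ is a gapset (it is the set of gaps of the multiplicity-$2$ numerical semigroup; alternatively, one checks directly that any $z\in H$ written as $z=x+y$ has its odd summand lying in $H$), of genus $4$, multiplicity $2$, with maximum consecutive distance $2=2\cdot 1$; hence $H\in\mathcal{G}_2(4)$, as claimed.

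I do not expect a serious obstacle here: once the characterization $H=\{1,3,\ldots,2g-1\}$ is in place, both directions follow from Proposition~\ref{km} together with a one-line comparison of the maximum consecutive distance. The only step that deserves real care is pinning down the definition of a hyperelliptic gapset and confirming its uniqueness for fixed genus, after which everything else is immediate.
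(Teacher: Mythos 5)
Your proof is correct and follows essentially the same route as the paper: identify a hyperelliptic gapset by the condition $m(G)=2$ and apply Proposition~\ref{km} to get $2n\le m(G)=2$, hence $n=1$. Your explicit verification of the converse (uniqueness of the hyperelliptic gapset of fixed genus and the direct check that $\{1,3,5,7\}$ is a pure $2$-sparse gapset of genus $4$) fills in a step the paper leaves implicit in its ``In particular'' clause, but the core argument is the same.
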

\begin{proof}
    The fact, if some family $ \mathcal{G}_{2n}(3n+1)$ has a hyperelliptic gapset $G$ we have that $2=m(G)$. The Proposition \ref{km} ensures that $ 2n\leq m$ we have  $ 2n\leq m(G)=2$, then $n=1.$
\end{proof}

The next proposition tells us that the set $\{i : \ell_{i+1} -\ell_i = 2n\}$ has an unique element.

\begin{proposition}\label{unique g}     Let $G $ be a pure $(2n)-$sparse gapset with genus $g$. Then
there is an unique $ \alpha\in [1,g-1] $ such that $ \ell_{\alpha+1} -\ell_\alpha=2n   $,  for all $n\in \mathbb{N}$ with $n> 2$.
\end{proposition}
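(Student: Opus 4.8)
The plan is to dispose of existence immediately and then reduce uniqueness to a single counting inequality in which the hypothesis $n > 2$ is exactly what is needed. Since $G$ is a pure $(2n)$-sparse gapset, the definition already furnishes at least one index $\alpha \in [1,g-1]$ with $\ell_{\alpha+1} - \ell_\alpha = 2n$; so the real content is uniqueness, and that is where I would concentrate the work.

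For uniqueness I would argue by contradiction. Suppose there are two distinct indices $\alpha_1 < \alpha_2$ in $[1,g-1]$ with $\ell_{\alpha_1+1} - \ell_{\alpha_1} = \ell_{\alpha_2+1} - \ell_{\alpha_2} = 2n$. The key device is the telescoping sum of the $g-1 = 3n$ consecutive differences, which equals $\ell_g - \ell_1$. Because $G$ is $(2n)$-sparse each difference is at least $1$, while by assumption two of them already equal $2n$, so
\[
\ell_g - \ell_1 = \sum_{i=1}^{g-1} (\ell_{i+1} - \ell_i) \;\ge\; 2\cdot(2n) + (3n-2)\cdot 1 \;=\; 7n - 2 .
\]

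To close the argument I would invoke the two standing facts about any non-empty gapset of genus $g$ recalled in the introduction: that $1 \in G$, hence $\ell_1 = 1$, and that $G \subseteq [1,2g-1]$, hence $\ell_g \le 2g-1 = 6n+1$. Substituting these into the displayed inequality gives $7n - 2 \le \ell_g - \ell_1 \le 6n$, that is $n \le 2$, which contradicts $n > 2$. Therefore no second jump of size $2n$ can exist and $\alpha$ is unique.

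I do not anticipate a genuine obstacle here: the entire proposition collapses to the elementary fact that $7n - 2 > 6n$ precisely when $n > 2$, which is exactly why the excluded values $n \le 2$ are the ones where a second jump of length $2n$ still fits inside $[1,2g-1]$. The only points needing care are citing the two extremal facts ($\ell_1 = 1$ and $\ell_g \le 2g-1$) correctly and verifying that the count of differences is exactly $g-1 = 3n$; beyond that the proof is pure bookkeeping.
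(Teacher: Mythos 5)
Your proposal is correct and takes essentially the same route as the paper: both are pigeonhole counts inside $[1,2g-1]=[1,6n+1]$ showing that two jumps of size $2n$ are incompatible with genus $g=3n+1$ once $n>2$. The paper counts the $2(2n-1)=4n-2$ forced non-gaps together with the $3n+1$ gaps (giving $7n-1\le 6n+1$), while you telescope the $3n$ consecutive differences to get $7n-2\le \ell_g-\ell_1\le 6n$; these are the same bound $n\le 2$ in different packaging.
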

\begin{proof}
    Suppose there are distinct $ \alpha, \beta\in [1,g-1]$  such that $  \ell_{\alpha+1} -\ell_\alpha=2n$ and  $\ell_{\beta+1} -\ell_\beta=2n$. Then, in $[\ell_{\alpha},\ell_{\alpha+1}]\cup [\ell_{\beta},\ell_{\beta+1}]$ we have $4n-2$ non-gaps.  Then, in $[1,2g-1]$ we have $4n-2$ non-gaps and $3n+1$ gaps. Therefore, $7n-1=4n-2 +3n+1\leq\# [1,2g]=2g-1=6n+1$ what is a contradiction.
\end{proof}

    

The next result gives us information about the multiplicity fo a symmetric gapset with genus $3n+1$ and pure $(2n)-$sparse.

\begin{proposition}\label{m=2n}
     Let $G $ be a pure $(2n)-$sparse  gapset with genus $g=3n+1$ and multiplicity $m$. If $G$ is symmetric, then $m=2n$.
\end{proposition}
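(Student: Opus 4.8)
The plan is to show that the next-to-largest gap $\ell_{g-1}$ lies exactly $m$ below $\ell_g$, which pins down $m$ at once. Since $G$ is symmetric we have $\ell_g = F(G) = 2g-1 = 6n+1$, and Proposition~\ref{km} already gives $2n \le m$. Hence it suffices to prove $\ell_g - \ell_{g-1} = m$: the pure $(2n)$-sparse hypothesis forces every consecutive difference to be at most $2n$, so $m = \ell_g - \ell_{g-1} \le 2n$, and together with $2n \le m$ this yields $m = 2n$.

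To locate $\ell_{g-1}$ I would proceed in two steps. First, I claim $\ell_g - m \in G$. Indeed $\ell_g - m \ge 6n+1-(g+1) = 3n-1 \ge 1$ by Proposition~\ref{mg}, so writing $\ell_g = (\ell_g - m) + m$ with both summands positive and applying the defining property of a gapset, one of the two summands is a gap; since $m$ is the multiplicity it is a non-gap, and therefore $\ell_g - m \in G$. Second, I claim the interval $[\ell_g - m + 1, \ell_g - 1] = [6n+2-m,\,6n]$ contains no gap. This is where symmetry enters: for each $x \in [1,m-1]$ we have $x \in G$ (as $[1,m-1]\subseteq G$), and the symmetric characterization $x \in G \iff \ell_g - x \notin G$ shows $\ell_g - x$ is a non-gap; as $x$ runs through $[1,m-1]$, the values $\ell_g - x$ sweep out precisely $[6n+2-m,6n]$. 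Combining the two steps, $\ell_g - m$ is a gap while every integer strictly between it and $\ell_g$ is a non-gap, so $\ell_{g-1} = \ell_g - m$ and thus $\ell_g - \ell_{g-1} = m$, which finishes the argument as above.

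The main point requiring care is the symmetric characterization $x \in G \iff \ell_g - x \notin G$ used in the second step. This is the standard involution property of symmetric numerical semigroups and follows from $F(G)=2g-1$: the map $x \mapsto \ell_g - x$ is an involution of $[0,\ell_g]$, at most one of $x,\ell_g-x$ can be a non-gap (otherwise their sum $\ell_g$ would be a non-gap), and the equality $\#G = g = (\ell_g+1)/2$ forces exactly one member of each pair to be a gap. Alternatively one could route through Theorem~\ref{Corollary 4.11} and Theorem~\ref{cota para tipo de G}: symmetry gives $PF(G)=\{\ell_g\}$, so the top block $G_{q-1}$ of the canonical partition is the singleton $\{\ell_g\}$, from which the same description of $\ell_{g-1}$ can be recovered. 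I expect everything else to be routine bookkeeping; the only inequalities worth double-checking are $\ell_g - m \ge 1$, so that the gapset axiom applies, and $6n+2-m \le 6n$, so that the swept interval is genuinely the predecessor region of $\ell_g$.
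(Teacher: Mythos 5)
Your proof is correct, and while it shares the paper's overall skeleton, the engine driving the key step is genuinely different. Both arguments reduce to pinching $\ell_g-\ell_{g-1}$: sparseness gives $\ell_g-\ell_{g-1}\le 2n$, Proposition \ref{km} gives $m\ge 2n$, so everything hinges on $\ell_g-\ell_{g-1}\ge m$. The paper gets this from the type machinery: by Theorem \ref{Corollary 4.11}, symmetry forces $PF(G)=\{\ell_g\}$, so $\ell_{g-1}$ is not pseudo-Frobenius, hence $\ell_{g-1}+s=\ell_g$ for some non-gap $s\neq 0$ (the only gap above $\ell_{g-1}$ being $\ell_g$), and $s\ge m$ finishes. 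You instead prove the sharper statement $\ell_{g-1}=\ell_g-m$ by elementary means: the gapset axiom applied to $\ell_g=(\ell_g-m)+m$ (legitimate since $\ell_g-m\ge 3n-1\ge 1$ by Proposition \ref{mg}) places $\ell_g-m$ in $G$, and the involution $x\in G \iff \ell_g-x\notin G$ clears $[\ell_g-m+1,\ell_g-1]$ of gaps. Your derivation of that involution is complete and self-contained: each pair $\{x,\ell_g-x\}$ in $[0,\ell_g]$ contains at least one gap by the gapset axiom applied to $\ell_g$, the oddness of $\ell_g=2g-1$ gives exactly $g$ fixed-point-free pairs, and $\#G=g$ forces exactly one gap per pair. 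What your route buys is independence from Theorem \ref{Corollary 4.11} plus, as a by-product, the exact position of the second-largest gap, a fact the paper only establishes later in Proposition \ref{caracterizando G simetrico q=4} (where $\ell_{g-1}=2m+1=\ell_g-m$); the cost is length, since once $PF(G)=\{\ell_g\}$ is granted the paper's inequality $s\ge m$ is immediate. One small caveat: your alternative sketch via Theorem \ref{cota para tipo de G} yields $G_{q-1}\subset PF(G)$ and hence $G_{q-1}=\{\ell_g\}$, but by itself this only confines $\ell_{g-1}$ to lie below $(q-1)m$, which does not recover $\ell_{g-1}=\ell_g-m$ without reverting either to the paper's pseudo-Frobenius argument or to your counting argument; it is best viewed as a restatement of the paper's route rather than a second independent one. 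Your main argument stands without it.
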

\begin{proof}
Suppose that $G$ symmetric.    By Theorem \ref{Corollary 4.11} we have $PF(G)=\{\ell_g\}$, so for each $x\in G$ exists a non-gap $s\neq 0$ such that $x+s\in G$, for all $x\neq \ell_g$. Taking $x=\ell_{g-1}$ exists a non-gap $s\neq 0$ such that $\ell_{g-1}+s\in G$, then $\ell_{g-1}+s=\ell_{g}$, i.e., $ \ell_{g}-\ell_{g-1}=s$. By Proposition \ref{km} we have $ 2n\leq m$ and conclude that $$2n\geq \ell_{g}-\ell_{g-1}=s\geq m\geq 2n.$$

\end{proof}

\begin{corollary} 
     Let $G $ be a pure $(2n)-$sparse gapset with genus $g=3n+1$ and depth $q$. Then $ q\leq 4$. 
\end{corollary}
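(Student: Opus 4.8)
The plan is to read off the bound directly from the definition of depth together with the two a priori size constraints we already have on the multiplicity and the conductor. Recall that $q = q(G) = \lceil c(G)/m \rceil$, so it suffices to bound $c(G)$ from above and $m$ from below and then estimate the resulting quotient.

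First I would bound the conductor. Since $G$ is a non-empty gapset of genus $g = 3n+1$, the inclusion $G \subseteq [1, 2g-1]$ recalled in the introduction gives $\ell_g \leq 2g - 1$, and hence $c(G) = F(G) + 1 = \ell_g + 1 \leq 2g = 6n + 2$. Next I would bound the multiplicity from below: because $G$ is pure $(2n)$-sparse, Proposition \ref{km} yields $\kappa = 2n \leq m$. Combining these two estimates and using that the ceiling function is monotone non-decreasing, I obtain
\[
q = \left\lceil \frac{c(G)}{m} \right\rceil \leq \left\lceil \frac{6n+2}{2n} \right\rceil = \left\lceil 3 + \frac{1}{n} \right\rceil = 4,
\]
valid for every positive integer $n$ (the value is exactly $4$ when $n = 1$ and strictly between $3$ and $4$ for $n \geq 2$, so the ceiling is $\leq 4$ in all cases). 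This gives $q \leq 4$.

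There is no real obstacle here; the only point requiring a little care is the handling of the ceiling, in particular the boundary case $n = 1$, where $c(G)/m$ can be as large as $4$ and one must confirm that the ceiling still equals $4$ rather than $5$. Everything else is an immediate consequence of Proposition \ref{km} and the standard containment $G \subseteq [1, 2g-1]$.
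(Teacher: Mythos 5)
Your proof is correct and takes essentially the same route as the paper's: both arguments bound the Frobenius number by $\ell_g \leq 2g-1 = 6n+1$ (hence $c(G) \leq 6n+2$), invoke Proposition \ref{km} to get $m \geq 2n$, and conclude $q \leq \left\lceil \frac{6n+2}{2n} \right\rceil = 4$. Your explicit check of the ceiling at the boundary case $n=1$ is a small addition the paper leaves implicit, but the substance is identical.
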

\begin{proof}
Since $\ell_g \leq 2g-1=6n+1$ and $2n\leq m$ by Proposition \ref{km}, we have that $$q=\left\lceil  \frac{c(G)}{m} \right\rceil\leq \left\lceil \frac{6n+2}{2n}\right\rceil=4.  $$
\end{proof}

\begin{theorem}\label{Gsimetrico equiv q=4}
Let $n$ be a  positive integer and $g=3n+1$.    Let $G $ be a pure $(2n)-$sparse gapset with genus $g$ and depth $q$. Then  $G$ is symmetric if, and only if, $q=4$.
\end{theorem}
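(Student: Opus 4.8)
The plan is to prove the biconditional by establishing the two implications separately, using the depth bound $q \leq 4$ from the preceding corollary together with the structural results already available. Since we know $2n \leq m$ (Proposition~\ref{km}) and $\ell_g \leq 6n+1$, the depth is constrained, and the genus condition $g = 3n+1$ gives us a counting lever: the gapset has exactly $3n+1$ gaps sitting inside $[1,6n+1]$, so there are exactly $3n$ non-gaps in $[1,\ell_g]$ to distribute among the canonical partition blocks $G_0 \cup G_1 \cup \cdots \cup G_{q-1}$.

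For the forward direction ($G$ symmetric $\Rightarrow q=4$), I would first invoke Proposition~\ref{m=2n} to get $m = 2n$ exactly. With $m=2n$ fixed, the interval $[1,2g-1]=[1,6n+1]$ decomposes into blocks of length $m=2n$, and I would show that the symmetry forcing $F(G)=2g-1=6n+1$ pushes $\ell_g$ to the top of the fourth block, i.e.\ $\ell_g \in [3m+1, 4m-1]=[6n+1,8n-1]$; since $\ell_g = 6n+1 = 3m+1$, this lands $\ell_g$ in $G_{q-1}$ with $q-1 = 3$, hence $q=4$. The key computation is $q = \lceil c(G)/m \rceil = \lceil (6n+2)/(2n) \rceil = 4$, which is forced once $m=2n$ and $c(G)=6n+2$ are both pinned down.

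For the converse ($q=4 \Rightarrow G$ symmetric), I would argue by the counting available. If $q=4$ then $\ell_g \geq 3m+1 \geq 6n+1$, but also $\ell_g \leq 2g-1 = 6n+1$, so $\ell_g = 6n+1 = F(G)+1-1$, giving $F(G) = 2g-1$, which is precisely symmetry. More carefully, $q=4$ combined with $2n \leq m$ forces $m=2n$ (otherwise $c(G) = \ell_g+1 \leq 6n+2$ would give $q = \lceil c/m \rceil \leq \lceil (6n+2)/(2n+2) \rceil < 4$ for $n$ large), and then $\ell_g$ is squeezed to equal $6n+1 = 2g-1$, yielding $F(G) = 2g-1$ and hence symmetry by Theorem~\ref{Corollary 4.11}. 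I would likely also need Theorem~\ref{cota para tipo de G} to control $PF(G)$ and confirm $PF(G) = \{\ell_g\}$, ruling out extra pseudo-Frobenius numbers in the top block.

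The main obstacle I anticipate is the converse direction: showing that $q=4$ genuinely forces $m=2n$ and $\ell_g = 6n+1$, rather than merely being consistent with them. The squeeze $3m+1 \leq \ell_g \leq 6n+1$ together with $2n \leq m$ only directly yields the result when $m=2n$; I must rule out $m > 2n$, since if $m \geq 2n+1$ then $3m+1 \geq 6n+4 > 6n+1$ contradicts $\ell_g \leq 6n+1$, so in fact $q=4$ already forces $m = 2n$ cleanly. Making this squeeze airtight --- and verifying that the resulting top block $G_{q-1}$ contains exactly one element so that $PF(G)=\{\ell_g\}$ via Theorem~\ref{cota para tipo de G} --- is where the real work lies, and I would handle the small cases $n \leq 2$ separately since Proposition~\ref{unique g} requires $n>2$.
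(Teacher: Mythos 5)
Your proof is correct, and your converse takes a genuinely different route from the paper's. The forward direction coincides with the paper's: Proposition~\ref{m=2n} gives $m=2n$, symmetry gives $c(G)=6n+2$, and $q=\lceil (6n+2)/(2n)\rceil=4$. For the converse, the paper invokes Proposition~\ref{posicao do salto} to place $\ell_\alpha$ in $G_{q-2}\cup G_{q-1}$, deducing $\ell_\alpha\geq 2m+1$ and then $\ell_g\geq \ell_{\alpha+1}=\ell_\alpha+2n\geq 2m+1+2n\geq 6n+1=2g-1$; you instead read the depth definition directly: $q=4$ forces $c(G)>3m$, hence $\ell_g\geq 3m+1\geq 6n+1$ by Proposition~\ref{km}, which together with $\ell_g\leq 2g-1=6n+1$ squeezes $F(G)=2g-1$. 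Your route is more elementary --- it bypasses the jump-position proposition (and hence its reliance on Proposition~\ref{abaixo}) entirely and works uniformly for all $n\geq 1$. A few trimmings: you do not need Theorem~\ref{cota para tipo de G} or any control of $PF(G)$, since in this paper symmetry is by definition the condition $F(G)=2g-1$, so the squeeze alone finishes the converse; Proposition~\ref{unique g} is never used in either direction, so no special handling of $n\leq 2$ is required; and your parenthetical $\lceil(6n+2)/(2n+2)\rceil$ should be $\lceil(6n+2)/(2n+1)\rceil=3<4$, which holds for every $n\geq 1$, not just large $n$ --- though your cleaner squeeze ($m\geq 2n+1$ gives $3m+1\geq 6n+4>6n+1$, contradicting $\ell_g\leq 6n+1$) already settles this. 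The one step to make airtight is $\ell_g\geq 3m+1$ rather than merely $\ell_g\geq 3m$: justify it by noting that every positive multiple of $m$ is a non-gap, so $\ell_g\neq 3m$, or equivalently by the canonical-partition containment $G_{q-1}\subseteq[(q-1)m+1,qm-1]$ used in the proof of Theorem~\ref{cota para tipo de G}.
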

\begin{proof}
If $G$ is symmetric, then $\ell_g = 2g-1=6n+1$ and $q=\left\lceil  \frac{c(G)}{m} \right\rceil= \left\lceil \frac{6n+2}{2n}\right\rceil=4.  $

Reciprocally, if $q=4$ we have by Proposition \ref{posicao do salto} we have $ \ell_\alpha \geq 2m+1$, where $\alpha = max\{i : \ell_{i+1} - \ell_i = 2n\}$. By Proposition \ref{km} we have 
$$ \ell_g\geq \ell_{\alpha+1}=2n+ \ell_{\alpha}\geq 2m+1+2n\geq 3(2n)+1=2g-1.$$
Therefore, $\ell_g=2g-1$ and $G$ is symmetric. 
\end{proof}

By straightforward calculation, the following holds.

\begin{proposition}
    For each $n\in \mathbb{N}$ with $n>1$ exists a pure $(2n)-$sparse gapset with genus $g=3n+1$ such that $q(G)=4$.
\end{proposition}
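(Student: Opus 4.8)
The plan is to prove the statement by exhibiting, for each integer $n>1$, one explicit member of $\mathcal{G}_{2n}(3n+1)$ of depth $4$ and then verifying the four required features (gapset property, genus, pure $(2n)$-sparseness, depth) directly. Concretely, I would set
\[
G \;=\; [1,2n-1]\,\cup\,[2n+1,3n]\,\cup\,\{4n+1,\,6n+1\}.
\]
A first count gives $\#G=(2n-1)+n+2=3n+1$, so the genus is $g=3n+1$ as required, and one notes in passing that $G$ avoids all multiples of $2n$, so it is an $m$-set candidate with the right shape.

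Next I would read off the consecutive differences. Inside $[1,2n-1]$ and inside $[2n+1,3n]$ they equal $1$; the step from $2n-1$ to $2n+1$ is $2$; the step from $3n$ to $4n+1$ is $n+1$; and the final step from $4n+1$ to $6n+1$ is $2n$. For $n>1$ all of these are $\le 2n$ and the value $2n$ is attained only at the top, so $G$ is pure $(2n)$-sparse with its single jump located exactly where Proposition \ref{unique g} predicts. Granting (as verified below) that $G$ is a gapset, its least non-gap is $2n$, so the multiplicity is $m=2n$, consistent with the bound $\k\le m$ of Proposition \ref{km}; and since $\ell_g=6n+1$ the conductor is $c(G)=6n+2$, whence $q(G)=\lceil (6n+2)/(2n)\rceil=4$. (Equivalently, $\ell_g=6n+1=2g-1$ makes $G$ symmetric, so Theorem \ref{Gsimetrico equiv q=4} yields $q=4$.)

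The main obstacle is verifying that $G$ is genuinely a gapset, i.e.\ that $S=\N_0\setminus G$ is closed under addition. The key observation is that the least positive element of $S$ is $m=2n$, so every sum of two positive elements of $S$ is at least $4n$; hence such a sum can fall into $G$ only at the two elements $4n+1$ and $6n+1$. It then suffices to rule these out: writing $4n+1=a+b$ or $6n+1=a+b$ with $2n\le a\le b$ forces $a=2n$, since $2n$ is the only element of $S$ in the relevant range $[2n,3n]$ (the rest of that range lies in $[2n+1,3n]\subseteq G$). The complementary summand is then $2n+1$ or $4n+1$ respectively, both of which lie in $G$ and hence not in $S$, a contradiction. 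Therefore no such decomposition exists, $S$ is a numerical semigroup, and $G$ is a gapset. Assembling the genus count, the difference list, and this closure check completes the proof for every $n>1$.
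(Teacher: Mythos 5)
Your proposal is correct and takes essentially the same approach as the paper, namely exhibiting a single explicit witness: your set $G=[1,2n-1]\cup[2n+1,3n]\cup\{4n+1,6n+1\}$ differs slightly from the paper's choice $[1,2n-1]\cup\{2n+1\}\cup[3n+1,4n-1]\cup\{4n+1,6n+1\}$, but both are valid (indeed both are symmetric, as Theorem \ref{Gsimetrico equiv q=4} forces for depth $4$). You actually go further than the paper, whose proof consists only of the phrase ``Take $G=\ldots$'', by carrying out the genus count, the difference list, and the closure check on the complement that the paper leaves implicit.
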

\begin{proof}
    Take $G=[1,2n-1]\cup \{2n+1\}\cup [3n+1,4n-1]\cup \{4n+1,6n+1\}$.
\end{proof}

 For the next result we consider a pure $(2n)$-sparse gapset $G$ with genus $3n+1$. If $q(G) \leq 3 $, then $G=G_0\cup G_1\cup G_2$ is the canonical
 partition. The next result ensures that \(\ell_\alpha\) belongs to either \(G_0\) or \(G_1\).
\begin{proposition}\label{ell alfa 2m-1}
    Consider
$G = \{\ell_1 < \ell_2 < \cdots < \ell_\alpha < \ell_{\alpha+1} < \cdots < \ell_g\}$ be a pure $(2n)-$sparse gapset with genus $g$ multiplicity m and depth $q\leq 3$, where $ \alpha = max\{i :\ell_{i+1} -\ell_{i} = 2n\}$. Then $\ell_\alpha  \leq 2m - 1$.
\end{proposition}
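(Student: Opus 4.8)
The plan is to translate the claim into a statement about the canonical partition and then dispatch the cases supplied by Proposition \ref{posicao do salto}. Writing $G = G_0 \cup G_1 \cup \cdots \cup G_{q-1}$ with $G_i = G \cap [im+1,(i+1)m-1]$, the desired inequality $\ell_\alpha \le 2m-1$ is exactly the statement that $\ell_\alpha \in G_0 \cup G_1$. If $q \le 2$ this is immediate, since then $G = G_0 \cup G_1 \subseteq [1,2m-1]$ and every element of $G$ is at most $2m-1$. So the substantive case is $q = 3$, where I would apply Proposition \ref{posicao do salto} with $q-2 = 1$ and $q-1 = 2$.

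Proposition \ref{posicao do salto} leaves three possibilities: $\ell_\alpha,\ell_{\alpha+1}\in G_1$; or $\ell_\alpha,\ell_{\alpha+1}\in G_2$; or $\ell_\alpha\in G_1$ and $\ell_{\alpha+1}\in G_2$. In the first and third cases one has $\ell_\alpha \in G_1 \subseteq [m+1,2m-1]$, so $\ell_\alpha \le 2m-1$ and we are done. Thus everything reduces to ruling out the single case $\ell_\alpha,\ell_{\alpha+1}\in G_2$, which I expect to be the main obstacle.

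To exclude that case, I would argue by contradiction using the arithmetic of $g = 3n+1$. If $\ell_\alpha \in G_2$ then $\ell_\alpha \ge 2m+1$, and by Proposition \ref{km} we have $m \ge 2n$, whence $\ell_\alpha \ge 2m+1 \ge 4n+1$. Since $G$ is pure $(2n)$-sparse at the index $\alpha$, this forces
\[
\ell_{\alpha+1} = \ell_\alpha + 2n \ge 6n+1.
\]
On the other hand $\ell_{\alpha+1} \le \ell_g \le 2g-1 = 6n+1$, so the entire chain of inequalities collapses to equalities: $\ell_\alpha = 4n+1$, $m = 2n$, and $\ell_g = 6n+1 = 2g-1$. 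Then $c(G) = 6n+2$ and
\[
q = \left\lceil \frac{c(G)}{m} \right\rceil = \left\lceil \frac{6n+2}{2n} \right\rceil = \left\lceil 3 + \frac{1}{n} \right\rceil = 4,
\]
which contradicts the hypothesis $q \le 3$. (Equivalently, $\ell_g = 2g-1$ makes $G$ symmetric, and Theorem \ref{Gsimetrico equiv q=4} would again force $q = 4$.) Hence the case $\ell_\alpha \in G_2$ cannot occur, and in all remaining cases $\ell_\alpha \le 2m-1$, completing the argument.
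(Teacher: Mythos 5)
Your proposal is correct and, at its core, is the same argument as the paper's: the paper simply supposes $\ell_\alpha \geq 2m+1$, deduces $\ell_{\alpha+1} = \ell_\alpha + 2n \geq 2m+1+2n \geq 3(2n)+1 = 2g-1$ via Proposition \ref{km}, and concludes $q=4$ from Theorem \ref{Gsimetrico equiv q=4}, contradicting $q \leq 3$ --- which is exactly your treatment of the case $\ell_\alpha \in G_2$. Your preliminary case analysis via Proposition \ref{posicao do salto} is harmless but redundant scaffolding, since the contradiction argument alone already handles every case with $q \leq 3$ at once.
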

\begin{proof}
    Suppose that $\ell_\alpha \geq 2m+1$. Then, $ \ell_{\alpha+1}=\ell_\alpha+2n\geq 2m+1+2n\geq 3(2n)+1=2g-1$. By Theorem \ref{Gsimetrico equiv q=4}  we would have $q=4$, which is a contradiction.
\end{proof}


 

The next result guarantees that the family $ \mathcal{G}_{2n}(g)$ has no any pseudo-symmetric gapsets.

\begin{theorem}\label{nao tem pseudo simetrico}
If $G$ is a pure $(2n)-$sparse  gapset with genus $g$, then  $G$  is not pseudo-symmetric.
\end{theorem}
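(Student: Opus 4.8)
The plan is to prove that no $G \in \mathcal{G}_{2n}(g)$ with $g = 3n+1$ can satisfy $F(G) = 2g-2 = 6n$, which by Theorem \ref{Corollary 4.16} is equivalent to showing $PF(G)$ cannot equal $\{F(G), F(G)/2\} = \{6n, 3n\}$. Since we already know (Corollary after Proposition \ref{m=2n}) that $q(G) \le 4$, and since by Theorem \ref{Gsimetrico equiv q=4} the case $q=4$ forces $G$ to be \emph{symmetric} (hence $F(G) = 2g-1$, not $2g-2$), the only case to rule out is $q(G) \le 3$.

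So assume for contradiction that $G$ is pseudo-symmetric with $q(G) \le 3$. First I would record the two hard facts a pseudo-symmetric gapset must satisfy: its Frobenius number is $\ell_g = 6n$ (an even number, forcing $m \neq 6n$ and pinning down which residue class $\ell_g$ lies in), and $3n = F(G)/2$ must itself be a pseudo-Frobenius number, hence $3n \in G$. The plan is to use the structural results already established: by Proposition \ref{ell alfa 2m-1} we have $\ell_\alpha \le 2m-1$, and combined with $\ell_{\alpha+1} = \ell_\alpha + 2n$ and Proposition \ref{km} ($m \ge 2n$), this tightly constrains where the unique ``jump'' of size $2n$ can sit relative to the blocks $G_0, G_1, G_2$ of the canonical partition. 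In particular I expect that $q \le 3$ together with $\ell_g = 6n$ forces $m$ to take a very restricted value (likely $m = 2n$ or $m = 2n+1$), which then pins down $c(G)$ and the block sizes.

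The crux of the argument will be a counting or membership contradiction at the midpoint $3n$. Because $3n = F(G)/2$ must be a pseudo-Frobenius number, every nonzero non-gap $s$ must send $3n$ to a non-gap, i.e. $3n + s \notin G$ for all $s \in S \setminus \{0\}$; in particular $3n + m \notin G$. At the same time $3n$ itself must be a gap. I would play this divisibility/parity information ($\ell_g = 6n$ even, $m \ge 2n$, $3n$ sitting at the halfway point) against the block structure from Proposition \ref{gilberto_partition} and Theorem \ref{cota para tipo de G}, which gives $G_{q-1} \subseteq PF(G)$. Since a pseudo-symmetric gapset has type exactly $2$, we get $\# G_{q-1} \le 2$, and the pseudo-Frobenius set is forced to be exactly $\{6n, 3n\}$ with $\{6n, 3n\} \supseteq G_{q-1}$; tracking whether $3n$ and $6n$ can simultaneously live in the top block $G_{q-1}$ (or whether $3n$ lands in a lower block, contradicting that it is pseudo-Frobenius) should produce the contradiction.

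The main obstacle I anticipate is the case analysis on the position of the jump index $\alpha$ among the three blocks $G_0, G_1, G_2$ (as enumerated in Proposition \ref{posicao do salto}), since each placement yields slightly different bounds on $m$ and on the block cardinalities, and I must verify in every sub-case that either $3n \notin G$ or that $3n$ fails the pseudo-Frobenius condition. I would handle this by first using the genus count $g = 3n+1$ together with $\# G = \# G_0 + \# G_1 + \# G_2$ to eliminate most configurations outright, reserving the delicate parity argument at $3n$ for the one or two surviving configurations. The likely clean contradiction is that the midpoint condition $3n \in PF(G)$ is incompatible with $m \ge 2n$ and the forced value of the conductor, because $3n$ will turn out to admit a non-gap $s$ with $3n + s \in G$, violating that $3n$ is pseudo-Frobenius.
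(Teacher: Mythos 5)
Your opening reduction is exactly the paper's: Theorem \ref{Gsimetrico equiv q=4} kills $q=4$ (depth $4$ forces symmetry, so $F(G)=2g-1\neq 2g-2$), and your $q=2$ plan via $G_{q-1}\subseteq PF(G)$, $\tau(G)=2$ and genus counting matches the paper's first case. The genuine gap is in the crux of the $q=3$ case: you aim the contradiction at the midpoint $3n$, hoping to exhibit a non-gap $s$ with $3n+s\in G$, but no such witness exists in the critical configuration, so the argument stalls precisely where it matters. Concretely, the configuration one must rule out has $m=2n$, $\ell_{g-1}=4n$, $\ell_g=6n$: the only values of $s$ landing on a gap above $3n$ are $s=n$ (reaching $4n$) and $s=3n$ (reaching $6n$), and both are themselves gaps ($[1,2n-1]\subseteq G$ since $m=2n$, and $3n=\ell_g/2\in PF(G)\subseteq G$), so $3n$ passes every test you propose to run on it. Your fallback --- that $3n$ landing in a block below $G_{q-1}$ would contradict its being pseudo-Frobenius --- is false: Theorem \ref{cota para tipo de G} is only the one-way inclusion $G_{q-1}\subseteq PF(G)$, and pseudo-symmetric gapsets routinely have $F(G)/2$ in a low block; e.g.\ $G=\{1,2,4\}$ is pseudo-symmetric with $m=3$, $q=2$ and $F(G)/2=2\in G_0$.

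The missing idea, which is the paper's actual proof, is to attack $\ell_{g-1}$ rather than $3n$. Since $\ell_g-\frac{\ell_g}{2}=3n>2n$, the $2n$-sparsity forbids $3n$ and $6n$ from being consecutive gaps, so $\ell_{g-1}$ lies strictly between them and hence $\ell_{g-1}\notin PF(G)=\{6n,3n\}$. That failure supplies a non-gap $s\neq 0$ with $\ell_{g-1}+s\in G$, forcing $\ell_{g-1}+s=\ell_g$, and the squeeze $2n\geq \ell_g-\ell_{g-1}=s\geq m\geq 2n$ (sparsity on the left, Proposition \ref{km} on the right) gives $s=m=2n$ and $\ell_{g-1}=4n=2m$ --- impossible, since $2m=m+m$ with $m\notin G$ can never be a gap. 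In short, the contradiction lives at the gap that must \emph{fail} to be pseudo-Frobenius, not at the one assumed to be; your plan as written, including the hoped-for restriction on $m$ (which itself only follows from this same squeeze), would not close.
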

\begin{proof} Suppose that exists a gapset $G $  pseudo-symmetric.
 By Theorem \ref{Gsimetrico equiv q=4}   we conclud that $ q:=q(G)\leq 3$ and  by Theorem  \ref{Corollary 4.16} we have $PF(G)=\{\ell_{g}, \frac{\ell_{g}}{2}\}$.
\begin{enumerate}
    \item If $q=2$, then $G=G_0 \cup G_1 $ then $x\in PF(G)$ for all $x\in G_1$. Therefore,  $\#G_1\leq \tau(G)=2$.
\begin{itemize}
 \item  If $\#G_1=0$ then $G=[1,m-1] $. So $G$ is ordinary and $2n=1$ implying an absurd.
    \item  If $\#G_1=2$ then $G=[1,m-1] \cup \{\ell_{g-1},\ell_g\} $. So $ 3n+1=g=(m-1) +2$ we have $m=3n=\frac{\ell_{g}}{2}$ what is an absurd.
    \item If $\#G_1=1$ we have $G=[1,m-1] \cup \{\ell_g\} $. So $ 3n+1=g=(m-1) +1$ and we conclude that $m-1=3n$ and $ \ell_{g-1}=m-1$. Therefore, we would have $ \ell_{g}-\ell_{g-1}=3n>2n$. Absurd.
\end{itemize}
    \item  If $q=3$, then $G=G_0 \cup G_1 \cup G_2$. Notice that  $\ell_g=2g-2=6n$, $ \frac{\ell_{g}}{2}=3n$ and $\ell_g-\frac{\ell_{g}}{2}=3n>2n$ then they must not be consecutive gaps, so $\frac{\ell_{g}}{2}<\ell_{g-1}<\ell_g$ and we conclude that $\ell_{g-1}\not\in PF(G)$. Then exists a non-gap $s\neq 0$ such that $ \ell_{g-1}+s=\ell_{g}$. So we conclude that $s=m=2n$ and therefore $$\ell_{g-1}=\ell_{g}-2n=6n-2n=4n=2m$$ what is a contradiction.
\end{enumerate}


\end{proof}

\begin{example} For $n=1$ we have $g=3n+1=4$.  The every pure $2$-sparse gapsets with genus $4$ are:
\begin{itemize}
    \item   $ \{1, 2, 3, 5 \}$,
    \item   $ \{1, 2, 4, 5 \} $,
    \item   $ \{1, 3, 5, 7 \} $.
    
\end{itemize}    
\end{example}

In the next result, we provide a  characterization for all symmetric gapsets in $ \mathcal{G}_{2n}(3n+1)$.

\begin{proposition}\label{caracterizando G simetrico q=4}
    Let $G $ be a pure $(2n)-$sparse  gapset with genus $g=3n+1$ and symmetric.  Then, $G=G_0\cup G_1\cup G_2\cup G_3$, $G_3=\{\ell_g\}$ and $G_2=\{\ell_{g-1}\}$. In particular, $\alpha=g-1$, $ \ell_{g-1}=2m+1 $, $ \ell_{g}= 3m+1$  and $\#G_1=n$.
\end{proposition}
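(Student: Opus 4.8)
The plan is to exploit the two structural facts already pinned down for symmetric members of $\mathcal{G}_{2n}(3n+1)$: that symmetry forces the depth to be $q=4$ (Theorem \ref{Gsimetrico equiv q=4}) and the multiplicity to be $m=2n$ (Proposition \ref{m=2n}). Together with symmetry these immediately give $\ell_g = 2g-1 = 6n+1$ and a canonical partition $G = G_0 \cup G_1 \cup G_2 \cup G_3$ with $G_0 = [1,2n-1]$ and $G_i = G \cap [im+1,(i+1)m-1]$. First I would read off the top block: since $3m+1 = 6n+1$ and $G \subseteq [1,2g-1] = [1,6n+1]$, the block $G_3 = G \cap [6n+1,8n-1]$ can only contain $6n+1 = \ell_g$, so $G_3 = \{\ell_g\}$. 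This also agrees with Theorem \ref{cota para tipo de G} combined with Theorem \ref{Corollary 4.11}, which give $G_3 = G_{q-1} \subseteq PF(G) = \{\ell_g\}$.

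The heart of the argument is to show that $G_2$ is a single element, namely $\{4n+1\}$. For the upper bound I would use that symmetry forces $PF(G) = \{\ell_g\}$ by Theorem \ref{Corollary 4.11}. Thus any $x \in G_2 \subseteq [4n+1,6n-1]$ with $x \neq \ell_g$ fails to be a pseudo-Frobenius number, so there is a non-gap $s \neq 0$ with $x + s \in G$. Since $m=2n$ is the least positive non-gap we have $s \geq 2n$, whence $6n+1 \geq x + s \geq (4n+1) + 2n = 6n+1$; equality forces $x = 4n+1$ and $s = m$. Hence $G_2 \subseteq \{4n+1\}$.

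For the lower bound --- the step I expect to be the main obstacle, since nothing established so far prevents $G_2$ from being empty --- I would invoke pure $(2n)$-sparseness directly. If $G_2 = \emptyset$, then the penultimate gap $\ell_{g-1}$ is the largest element of $G_1 \subseteq [2n+1,4n-1]$, so $\ell_{g-1} \leq 4n-1$ and therefore $\ell_g - \ell_{g-1} \geq (6n+1)-(4n-1) = 2n+2 > 2n$, contradicting that consecutive differences of $G$ are at most $2n$. Hence $G_2 \neq \emptyset$, and combined with the upper bound $G_2 = \{4n+1\} = \{\ell_{g-1}\}$.

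The remaining assertions then fall out by bookkeeping. With $m = 2n$ I get $\ell_{g-1} = 4n+1 = 2m+1$ and $\ell_g = 6n+1 = 3m+1$, and $\ell_g - \ell_{g-1} = 2n = \k$, so the last jump of size $2n$ sits in position $g-1$; since $\alpha$ is the largest index with $\ell_{i+1}-\ell_i = 2n$ and $\alpha \leq g-1$ always, this yields $\alpha = g-1$. Finally, counting blocks gives $\#G_1 = g - \#G_0 - \#G_2 - \#G_3 = (3n+1)-(2n-1)-1-1 = n$, completing the characterization.
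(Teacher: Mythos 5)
Your proof is correct and follows essentially the same route as the paper: both rest on symmetry giving $PF(G)=\{\ell_g\}$ (Theorem \ref{Corollary 4.11}), $m=2n$ (Proposition \ref{m=2n}) and depth $4$ (Theorem \ref{Gsimetrico equiv q=4}), followed by the squeeze $s\geq m=2n$ on a non-pseudo-Frobenius gap to force $\ell_{g-1}=2m+1$ and then the same block-counting. The only organizational difference is that the paper applies pure $(2n)$-sparseness directly to get $\ell_g-\ell_{g-1}\leq 2n$ and pins down $\ell_{g-1}$ in one stroke, whereas you bound $G_2$ elementwise via $G\subseteq[1,2g-1]$ and invoke sparseness separately to rule out $G_2=\emptyset$ --- a step the paper's direct computation makes unnecessary.
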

\begin{proof}
Since $G $ is symmetric we know that $\ell_g=2g-1=3m+1$ by Proposition \ref{m=2n} and  $PF(G)=\{\ell_g\}$ by Theorem \ref{Corollary 4.11}.  The Theorem \ref{Gsimetrico equiv q=4} ensures that $G=G_0\cup G_1\cup G_2\cup G_3$. As $\ell_{g-1}$ not is pseudo-Frobenius, exists a non-gap $s\neq 0$ such that $ \ell_{g-1}+s=\ell_g$, then $$ s=\ell_g -\ell_{g-1}\leq 2n=m\leq s$$
Therefore, $\ell_{g-1}=\ell_g-2n=6n+1-2n=2m+1. $ So we conclude that $G_3=\{\ell_g\}$, $G_2=\{\ell_{g-1}\}$. 

Consequently $3n+1=\#G=2n-1 +\#G_1+2$ where we obtain $\#G_1=n$.

\end{proof}

As a consequence of the previous result, we note that $m+1$ is always a gap. We show this in the next result.

\begin{corollary}\label{m+1 esta em G}
     Let $G $ be a pure $(2n)-$sparse  gapset with genus $g=3n+1$ and symmetric. Then $m+1\in G$.
\end{corollary}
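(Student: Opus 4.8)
The plan is to derive the claim directly from the defining property of a gapset, using only the explicit description of $\ell_{g-1}$ supplied by Proposition \ref{caracterizando G simetrico q=4}. Since $G$ is a symmetric pure $(2n)$-sparse gapset of genus $3n+1$, that proposition tells us that $\ell_{g-1} = 2m+1$, so in particular $2m+1 \in G$.

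With this in hand, I would write the decomposition $2m+1 = (m+1) + m$ as a sum of two positive integers. Because $2m+1 \in G$, the defining property of a gapset forces at least one of the two summands to lie in $G$. Now $m = m(G)$ is the multiplicity, so by definition $m \notin G$, i.e.\ $m$ is a non-gap. Consequently the only remaining possibility is that the other summand $m+1$ belongs to $G$, which is exactly the desired conclusion.

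I do not expect any genuine obstacle here: the sole nontrivial input is the identity $\ell_{g-1} = 2m+1$, which is already established in the preceding proposition, and everything else is an immediate invocation of the gapset axiom. If one prefers the language of the canonical partition, an equivalent route is to note that $\ell_{g-1} = 2m+1 \in G_2$, and then to use the inclusion $G_2 \subseteq m + G_1$ valid for every gapset (each gap $x$ with $x-m$ a non-gap would contradict the gapset property, forcing $x-m \in G_1$); this gives $m+1 \in G_1 \subseteq G$ and recovers the same statement.
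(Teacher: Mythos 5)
Your proof is correct and is essentially the paper's argument in contrapositive form: the paper assumes $m+1\notin G$, notes that then $m+(m+1)=2m+1$ would be a non-gap, and contradicts $\ell_{g-1}=2m+1$ from Proposition \ref{caracterizando G simetrico q=4}, while you apply the gapset axiom directly to the same decomposition $2m+1=m+(m+1)$ using $m\notin G$. Both rest on exactly the same two inputs, so there is no substantive difference.
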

\begin{proof}
    Suppose that $m+1\not\in G$, so $m+(m+1)=2m+1$ is a non-gap. But, on the other hand by Proposition \ref{caracterizando G simetrico q=4} we have $2m+1=\ell_{g-1}$ what is a contradiction.
\end{proof}

\begin{example}
The gapset  $  G=[ 1, 2, 3, 4, 5, 6, 7, 9, 10, 11, 12, 17, 25 ]  $ has genus 13 and is a pure $8-$sparse, $\ell_g=25=3m+1$ and $m+1=9\in G$. Therefore, $G$ is a symmetric gapset with genus $g=3n+1$, where $n=4$.
\end{example}


    

\begin{theorem}\label{contagem G simetrico q=4}
     The quantity of symmetric gapsets in $\mathcal{G}_{2n}(3n+1)$ is $2^{n-1}.$
\end{theorem}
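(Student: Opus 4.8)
The plan is to reduce the enumeration to a combinatorial choice on the block $G_1$ and then to show that the gapset condition is equivalent to a clean ``no pair summing to $\ell_g$'' constraint. By Proposition \ref{caracterizando G simetrico q=4} together with Proposition \ref{m=2n}, every symmetric $G \in \mathcal{G}_{2n}(3n+1)$ has multiplicity $m = 2n$ and canonical partition $G = G_0 \cup G_1 \cup G_2 \cup G_3$ with $G_0 = [1, 2n-1]$, $G_2 = \{4n+1\}$, $G_3 = \{6n+1\}$ and $\#G_1 = n$, where $G_1 \subseteq m + G_0 = [2n+1, 4n-1]$. By Corollary \ref{m+1 esta em G} we have $m+1 = 2n+1 \in G$, and since $2n \notin G$ this forces $2n+1 \in G_1$. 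Thus $G$ is completely determined by the choice of the remaining $n-1$ elements of $G_1$ inside $[2n+2, 4n-1]$, and it remains to count exactly those choices for which $G$ is a gapset.

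Next I would encode the gapset condition through the complementary numerical semigroup $S = \N_0 \setminus G$, which must be closed under addition. Writing $A' = G_1 \setminus \{2n+1\} \subseteq [2n+2, 4n-1]$ and $B = [2n+2, 4n-1] \setminus A' = S \cap [2n+2, 4n-1]$, I would check, block by block, which sums of two positive elements of $S$ can land in $G$. Since the least positive element of $S$ is $m = 2n$, any such sum is at least $4n$, so no element of $G_0 \cup G_1$ is ever hit; the value $4n+1 \in G_2$ could only arise as $2n + (2n+1)$, which is excluded because $2n+1 \in G$; and for $\ell_g = 6n+1$ the only possible decompositions into two positive elements of $S$ are pairs $\{x, 6n+1-x\}$ with both $x, 6n+1-x \in B$. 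Hence $G$ is a gapset if and only if $B$ contains no two elements summing to $6n+1$. The main obstacle is precisely this case analysis: one must verify carefully that $4n \in S$ while $2n+1, 4n+1 \notin S$, so that no additional forbidden sums appear and $\ell_g$ is the only ``dangerous'' value.

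Finally I would carry out the count. The involution $x \mapsto 6n+1 - x$ acts on $[2n+2, 4n-1]$ without fixed points (as $6n+1$ is odd), partitioning this $(2n-2)$-element set into exactly $n-1$ pairs, each summing to $6n+1$. Since $\#B = (2n-2) - \#A' = n-1$, the requirement that $B$ meet no pair in both coordinates, imposed on a set of size $n-1$ distributed among $n-1$ pairs, forces $B$ to contain exactly one element of each pair; equivalently $A'$ contains exactly one element of each pair. There are two independent choices per pair, giving $2^{n-1}$ admissible sets $A'$, hence $2^{n-1}$ symmetric gapsets. A short verification that each resulting $G$ is indeed pure $(2n)$-sparse, with the maximal gap $2n$ occurring only at $\ell_g - \ell_{g-1}$, completes the argument; the formula also recovers the hyperelliptic case $n=1$, where $2^{0} = 1$ gives the single gapset $G = \{1,3,5,7\}$.
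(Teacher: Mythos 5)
Your proposal is correct, and its backbone coincides with the paper's: both reduce, via Propositions \ref{m=2n} and \ref{caracterizando G simetrico q=4} together with Corollary \ref{m+1 esta em G}, to choosing the $n-1$ remaining elements of $G_1$ inside $[2n+2,4n-1]$; both organize that interval into the $n-1$ pairs $\{x,\,6n+1-x\}$ of the fixed-point-free involution $x\mapsto \ell_g-x$ (your Claim about this is the paper's Claim \ref{claim1}); and both conclude with two independent choices per pair. The genuine difference is how the ``exactly one element per pair'' constraint is established. The paper gets necessity from the pseudo-Frobenius side: since $PF(G)=\{\ell_g\}$ by Theorem \ref{Corollary 4.11}, every gap $x\in[m+2,2m-1]$ forces $\ell_g-x$ to be a non-gap (its equation (\ref{eq1}), leading to property (\ref{eqX})), and it proves sufficiency separately by verifying the gapset axiom on each $z\in G'$ case by case (Claim \ref{claim2}). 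You instead dualize: working with $S=\N_0\setminus G$ and closure under addition, you note that since the least positive non-gap is $2n$, the only sums of two positive non-gaps that can land in $G$ are $2n+(2n+1)=4n+1$ (excluded because $2n+1\in G$) and pairs summing to $\ell_g=6n+1$, so within the fixed structure the gapset condition collapses to ``$B$ contains no pair summing to $6n+1$''; the transversal structure then follows from the pigeonhole $\#B=n-1$ among $n-1$ pairs rather than from PF theory. This buys a single equivalence handling necessity and sufficiency at once, with PF machinery entering only through the cited structural propositions, and your per-pair count is cleaner than the paper's quotient formula $\prod_{i=1}^{n-1}(2n-2i)/(n-1)!=2^{n-1}$. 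The one step you defer --- that each constructed $G$ is pure $(2n)$-sparse, which is genuinely needed for membership in $\mathcal{G}_{2n}(3n+1)$ --- is indeed a one-liner: $\{2n+1\}\cup X\cup\{4n+1\}$ lies in the interval $[2n+1,4n+1]$ of length $2n$, so all internal consecutive differences are at most $2n$ (strictly less for $n\geq 2$, since $X\neq\emptyset$ sits strictly between the endpoints), while $\ell_g-\ell_{g-1}=2n$ realizes purity; the paper's proof leaves this point equally implicit.
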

\begin{proof}
 Let $G $ be a pure $(2n)-$sparse  gapset with genus $g=3n+1$  and symmetric.
   As $G $ is symmetric we know that $\ell_g=2g-1$, by Proposition \ref{m=2n} we have $m=2n$ and $PF(G)=\{\ell_g\}$ by Theorem \ref{Corollary 4.11}. By Proposition \ref{caracterizando G simetrico q=4} we have that $\ell_g=3m+1$, $\ell_{g-1}=2m+1 $ and $G=[1,m-1]\cup G_1\cup \{2m+1\}\cup \{3m+1\}$ with $\#G_1=n$. By Corollary \ref{m+1 esta em G} we have $m+1\in G$. Therefore, for each $n\in \mathbb{N}$ we have $n-1$ possibilities choices for gaps in $G_1\subset [m+1,2m-1] $.

Since $G$ is symmetric and $PF(G)=\{\ell_g\}$, any gap other than $\ell_g$ is a non pseudo-Frobenius number. In particular, for each  non pseudo-Frobenius number $x\in [m+2,2m-1]$ has the following property:
   \begin{equation}\label{eq1}
       \exists y\in [m+2,2m-1]\ such\ that  \ x+y=\ell_g, \ \text{where y is a non-gap}.
   \end{equation}

\begin{claim}\label{claim1}
   For each $x\in [m+2,2m-1]=[2n+2,4n-1]$ there exists a only $y\in [m+2,2m-1]$ satisfying equation and the number of entries for x is $n-1$.
\end{claim}
 In fact, since $x\in [m+2,2m-1]$ we can write it as $ x=2n+2+i$ where $i=0,\ldots,2n-3$ then $6n+1=\ell_g=x+y=2n+2+i+y \Rightarrow y=4n-1-i$. Since $0\leq i\leq 2n-3$ then $ 2n+2\leq y\leq 4n-1$, i.e., $y\in [m+2,2m-1]$.

In the next, we define the following set: $$G'=[1,m-1]\cup G'_1\cup \{2m+1\}\cup \{3m+1\} $$ where $G'_1:=\{m+1\}\cup X$,  $m=2n$ and $X\subset [m+2,2m-1]$ consists by all elements $ x\in [m+2,2m-1]$ with the following property:

\begin{equation}\label{eqX}
  x\in X \Leftrightarrow \ell_g-x \not\in X.  
\end{equation}

Note by Claim \ref{claim1} and by construction  of the $G'_1$ has $n$ elementos. Therefore, $\#G'=3n+1.$ We have $2^{n-1}$ possibilities to build $G_1'$ and therefore for $G'$. Indeed, the number of possibilities for $G_1'$ defined by the property (\ref{eqX}) is
\begin{equation*}
\frac{\prod_{i=1}^{n-1}(2n-2i)}{(n-1)!}=2^{n-1}.    
\end{equation*}
Note that any $G$ symmetric pure $(2n)-$sparse  gapset with genus $g=3n+1$ is like $G'$.

\begin{claim}\label{claim2}
     $G'$ is a gapset. 
\end{claim}
Indeed, First note that by construction $m,2m,3m$ and $4m\not\in G$. Therefore, $G$ is $m-$set. Let $z\in G$ and write $z=x+y$.
We consider three cases as follows:
\begin{enumerate}
\item If $z\in [1,m-1]$. In this case, both $x,y\in G$.

\item If $z\in G'_1$,  $    \subseteq [m+2, 2m-1]$. In this case, $2x \leq x + y = z \leq2m -1$. Thus, $x \leq m-1$ and $x \in G'_0$.

\item If $z=2m+1$. Note that $ x,y\geq m+1 \Rightarrow x+y\geq 2m+2$, so we can conclude without loss of generality that $ x\leq m$.
\begin{itemize}
    \item If $x=m$, then $y=m+1\in G'_1$.

    \item If $x\leq m-1$, then $x\in G_0'$.
    
\end{itemize}

\item If $z=3m+1$. Note that $ x,y\geq 2m \Rightarrow x+y\geq 4m$ , so we can conclude without loss of generality that  $ x\leq 2m-1$.

\begin{itemize}
    \item If  $x\leq m-1$, then $x\in G_0'$.

    \item If  $x=m$, then $y=2m+1\in G'$

    \item If $x\in [m+2,2m-1]$. Since $y=\ell_g-x $ we have by Claim \ref{claim1} and by Property (\ref{eqX}) that $x\in G \Leftrightarrow y\not\in G$ and conversely $x\not\in G \Leftrightarrow y\in G$ .
    
\end{itemize}
Therefore, $G'$ is a gapset.
\end{enumerate}



\end{proof}

\section{On pure $(2n+1)-$sparse gapsets with genus $3n+2$  }\label{sec4}

In this section we let consider the family of the pure $\k-$sparse gapsets with genus $g$ with $g=3n+2$ and $\k=2n+1$, for all $n\in \mathbb{N}$, denoted by $ \mathcal{G}_{2n+1}(g)$. More precisely, we  study the structure of this gapsets, some properties and calculate the quantity of the symmetric gapsets in $ \mathcal{G}_{2n+1}(g)$.



The next proposition tells us that the set $\{i : \ell_{i+1} -\ell_i = 2n+1\}$ has an unique element.

\begin{proposition}\label{unique g+1}
    Let $G $ be a pure $(2n+1)-$sparse  gapset with genus $g$. Then
there is an unique $ \alpha\in [1,g-1] $ such that $ \ell_{\alpha+1} -\ell_\alpha=2n +1  $,  for all $n\in \mathbb{N}$ with $n\geq 2$.
\end{proposition}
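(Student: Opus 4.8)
The plan is to adapt, essentially verbatim, the pigeonhole counting argument used for the analogous statement in the $(2n)$-sparse case (Proposition \ref{unique g}), adjusting the constants to the present regime $g = 3n+2$, $\k = 2n+1$. First I would argue by contradiction: assume there exist two distinct indices, say $\alpha < \beta$ in $[1,g-1]$, with $\ell_{\alpha+1} - \ell_\alpha = 2n+1$ and $\ell_{\beta+1} - \ell_\beta = 2n+1$. The goal is to show that the two large jumps force more integers into $[1,2g-1]$ than it can hold once $n \geq 2$.

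The key step is the local count of forced non-gaps. Since $\ell_\alpha$ and $\ell_{\alpha+1}$ are consecutive elements of $G$ (consecutive gaps), every integer strictly between them is a non-gap, and there are exactly $(2n+1)-1 = 2n$ of them; the same holds for the jump at $\beta$. Because $\alpha < \beta$ forces $\ell_{\alpha+1} \leq \ell_\beta$, the open intervals $(\ell_\alpha,\ell_{\alpha+1})$ and $(\ell_\beta,\ell_{\beta+1})$ are disjoint, so together they contribute $4n$ distinct non-gaps, all of which are distinct from the elements of $G$.

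Finally I would combine this with the global constraint $G \subseteq [1,2g-1]$. The interval $[1,2g-1]$ contains $2g-1 = 6n+3$ integers, which must accommodate both the $\#G = g = 3n+2$ gaps and the $4n$ forced non-gaps found above. Counting these disjoint families gives
\begin{equation*}
(3n+2) + 4n = 7n + 2 \leq 6n + 3,
\end{equation*}
hence $n \leq 1$, contradicting the hypothesis $n \geq 2$. Therefore the index $\alpha$ with $\ell_{\alpha+1}-\ell_\alpha = 2n+1$ is unique, and since $G$ is a pure $(2n+1)$-sparse gapset at least one such $\alpha$ exists. I expect no real obstacle here; the only point requiring care is the disjointness of the two intervals and the fact that the counted non-gaps do not coincide with any gap, both of which are settled cleanly by fixing the ordering $\alpha < \beta$.
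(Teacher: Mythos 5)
Your proposal is correct and follows essentially the same pigeonhole argument as the paper: two jumps of size $2n+1$ force $4n$ non-gaps, which together with the $g=3n+2$ gaps overfill $[1,2g-1]=[1,6n+3]$ once $n\geq 2$. If anything, your write-up is slightly more careful than the paper's (which contains typos, writing one jump as $2n$ and using a strict inequality), since you correctly justify the disjointness of the two intervals via $\alpha<\beta$ and obtain $7n+2\leq 6n+3$, i.e.\ $n\leq 1$, which explains exactly why the hypothesis $n\geq 2$ is needed.
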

\begin{proof}
     Suppose there are $ \alpha, \beta\in [1,g-1]$ distinct such that $  \ell_{\alpha+1} -\ell_\alpha=2n$ and  $\ell_{\beta+1} -\ell_\beta=2n+1$. Then, in $[\ell_{\alpha},\ell_{\alpha+1}]$ and $[\ell_{\beta},\ell_{\beta+1}]$ we have $4n$ non-gaps.  Then, in $[1,2g]$ we have $3n+2$ gaps and at least $4n$ non-gaps. Therefore, $7n+2=4n +3n+2<\# [1,2g-1]=2g-1=6n+3$ what is a contradiction.
\end{proof}

The next result ensures that $\ell_\alpha$ belongs to either \(G_0\) or \(G_1\).

\begin{proposition}\label{ell2 alfa 2m-1}
    Let $n$ and $g =3n+2$ be positive integers  and consider
$G = \{\ell_1 < \ell_2 < \cdots < \ell_\alpha < \ell_{\alpha+1} < \cdots < \ell_g\}$ be a pure $(2n+1)-$sparse gapset with genus $g$, multiplicity m and depth $q\leq 3$, where $ \alpha = max\{i :\ell_{i+1} -\ell_{i} = 2n+1\}$. Then $\ell_\alpha  \leq 2m - 1$.
\end{proposition}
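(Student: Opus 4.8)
The plan is to argue by contradiction, in the spirit of the proof of Proposition \ref{ell alfa 2m-1}, but exploiting that here the genus equals $3n+2$ rather than $3n+1$. This change of parity makes the analogous estimate overshoot the interval $[1,2g-1]$ strictly, so the contradiction becomes immediate and no appeal to a symmetry-versus-depth characterization is required.

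First I would reduce the claim to ruling out $\ell_\alpha \geq 2m+1$. Since $G$ is an $m$-set, every multiple of $m$ is a non-gap, hence $2m \notin G$; therefore the negation of $\ell_\alpha \leq 2m-1$ is precisely $\ell_\alpha \geq 2m+1$. I would also note that $\alpha \in [1,g-1]$, so $\ell_{\alpha+1}$ is a genuine element of $G$ with $\ell_{\alpha+1} \leq \ell_g$. Assuming $\ell_\alpha \geq 2m+1$ and using that the jump of size $2n+1$ sits at the index $\alpha$, together with the bound $m \geq 2n+1$ furnished by Proposition \ref{km}, I would chain
\begin{equation*}
\ell_{\alpha+1} = \ell_\alpha + (2n+1) \geq (2m+1)+(2n+1) = 2m+2n+2 \geq 2(2n+1)+2n+2 = 6n+4.
\end{equation*}
On the other hand, a non-empty gapset of genus $g$ lies in $[1,2g-1]$, so $\ell_{\alpha+1} \leq \ell_g \leq 2g-1 = 6n+3$, and the comparison $6n+4 \leq 6n+3$ is absurd. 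Hence $\ell_\alpha \leq 2m-1$.

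I expect no serious obstacle: the argument is a short chain of inequalities. The two points that require care are, first, the passage from $\ell_\alpha > 2m-1$ to $\ell_\alpha \geq 2m+1$, which rests on the $m$-set property $2m \notin G$ rather than on any gapset-specific fact; and second, the contrast with the $(2n)$-sparse case of Proposition \ref{ell alfa 2m-1}, where the corresponding estimate lands exactly on $2g-1$ and forces the borderline multiplicity $m=2n$ to be excluded via Theorem \ref{Gsimetrico equiv q=4}. Here the estimate is strict, so in fact the hypothesis $q \leq 3$ is never used: the inequality $\ell_\alpha \leq 2m-1$ holds for every pure $(2n+1)$-sparse gapset of genus $3n+2$. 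I would keep the depth hypothesis in the statement only for parallelism with the previous section.
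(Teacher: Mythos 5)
Your proof is correct and is essentially the paper's own argument: assume $\ell_\alpha \geq 2m+1$, use the jump at $\alpha$ and the bound $m \geq 2n+1$ from Proposition \ref{km} to get $\ell_{\alpha+1} \geq 6n+4 = 2g$, contradicting $G \subseteq [1,2g-1]$. Your side observations are also accurate and slightly sharper than the paper's write-up: the paper likewise never uses the hypothesis $q \leq 3$ (unlike the genuinely depth-dependent proof of Proposition \ref{ell alfa 2m-1}), and your explicit justifications that $\ell_\alpha \neq 2m$ and that the contradiction comes from $\ell_g \leq 2g-1$ fill in steps the paper leaves implicit.
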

\begin{proof}
    Suppose that $\ell_\alpha \geq 2m+1$. Then, $ \ell_{\alpha+1}=\ell_\alpha+(2n+1)\geq 2m+1+2n+1\geq 3(2n+1)+1=6n+4=2g$, which is a contradiction.
\end{proof}

\begin{example}
 The gapset $ G=  [ 1, 2, 3, 4, 5, 6, 7, 8, 10, 11, 13, 14, 17, 26 ]$ has genus $14$ and is a pure $9-$sparse with multiplicity $m=9$. Therefore, $G$ is a  pseudo-symmetric gapset with genus $g=3n+2$, where $n=4$.
\end{example}
The next result gives us information about the multiplicity for a  pseudo-symmetric gapset  with genus $3n+2$ and pure $(2n+1)-$sparse .
\begin{proposition} \label{m=2n+1}
     Let $G $ be a pure $(2n+1)-$sparse  gapset with genus $g$ and multiplicity $m$. If $G$ is pseudo-symmetric, then $m=2n+1$.
\end{proposition}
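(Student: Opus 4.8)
The plan is to mirror the proof of Proposition \ref{m=2n}, replacing the symmetric characterization by the pseudo-symmetric one. Assume $G$ is pseudo-symmetric. Then $\ell_g = F(G) = 2g-2 = 6n+2$, and Theorem \ref{Corollary 4.16} gives $PF(G) = \{\ell_g, \ell_g/2\} = \{6n+2, 3n+1\}$. As in the symmetric case, I would examine the second-largest gap $\ell_{g-1}$: if I can show $\ell_{g-1} \notin PF(G)$, then by definition there is a non-gap $s \neq 0$ with $\ell_{g-1} + s \in G$, and since $\ell_{g-1}$ is the penultimate element of $G$ this forces $\ell_{g-1}+s = \ell_g$, i.e. $s = \ell_g - \ell_{g-1}$.

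The crux, and the only real difference from the symmetric argument, is ruling out $\ell_{g-1} \in PF(G)$. Since $\ell_{g-1} < \ell_g$ and $PF(G)$ consists of exactly the two elements above, the only possibility would be $\ell_{g-1} = \ell_g/2 = 3n+1$. But then $\ell_g - \ell_{g-1} = 6n+2 - (3n+1) = 3n+1$, whereas the fact that $G$ is $(2n+1)$-sparse forces every difference of consecutive elements, in particular $\ell_g - \ell_{g-1}$, to be at most $2n+1$. For $n \geq 1$ we have $3n+1 > 2n+1$, a contradiction; hence $\ell_{g-1} \notin PF(G)$. This is precisely the step where the hypotheses $g = 3n+2$ and pseudo-symmetry genuinely enter, through the value $\ell_g = 6n+2$, and I expect it to be the main (indeed the only) obstacle.

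With $s = \ell_g - \ell_{g-1}$ now known to be a nonzero non-gap, I would close the argument exactly as in Proposition \ref{m=2n}: the $(2n+1)$-sparseness gives $s = \ell_g - \ell_{g-1} \leq 2n+1$, while $s$ being a non-gap gives $s \geq m$, and Proposition \ref{km} gives $m \geq 2n+1$. Chaining these,
$$2n+1 \geq \ell_g - \ell_{g-1} = s \geq m \geq 2n+1,$$
which forces every inequality to be an equality and yields $m = 2n+1$. No hidden difficulty is expected beyond the membership step for $PF(G)$; the remainder is the same short chain of inequalities used for the symmetric family.
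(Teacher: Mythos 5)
Your proposal is correct and takes essentially the same route as the paper's own proof: both arguments rule out $\ell_{g-1}\in PF(G)$ by observing that $\ell_g-\frac{\ell_g}{2}=3n+1>2n+1$ (the paper phrases this as forcing an element of $G$ strictly between $\frac{\ell_g}{2}$ and $\ell_g$, while you directly contradict the sparseness bound on the consecutive difference), then extract the nonzero non-gap $s=\ell_g-\ell_{g-1}$ and conclude via the chain $2n+1\geq s\geq m\geq 2n+1$ using Proposition \ref{km}. The difference in phrasing is purely cosmetic; the key step and the closing inequalities are identical.
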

\begin{proof}
    Suppose that $G$ pseudo-symmetric.    By Theorem \ref{Corollary 4.16} we have $PF(G)=\{\ell_g,\frac{\ell_g}{2}\}$, then $\frac{\ell_g}{2}=3n+1$. Note that $ \ell_g-\frac{\ell_g}{2}=3n+1>2n+1$, then exists $\ell\in G$ sush that $\frac{\ell_g}{2}<\ell<  \ell_g$. Taking $\ell=\ell_{g-1}$ exists a non-gap $s\neq 0$ such that $\ell_{g-1}+s\in G$, then $\ell_{g-1}+s=\ell_{g}$, i.e., $ \ell_{g}-\ell_{g-1}=s$. By Proposition \ref{km} we have $ 2n+1\leq m$ and we conclude that 
 $$2n+1\geq \ell_{g}-\ell=s\geq m\geq 2n+1.$$
\end{proof}

The next proposition ensures that the family $ \mathcal{G}_{2n+1}(g)$ does not have any symmetric gapsets.

\begin{proposition}\label{nao tem  simetrico}
If $G $ is a pure $(2n+1)-$sparse  gapset with genus $g$, then  G not is symmetric.
\end{proposition}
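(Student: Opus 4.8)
The plan is to argue by contradiction: assuming $G$ is symmetric, I would first force the multiplicity to equal $2n+1$ exactly, and then observe that this makes $\ell_g$ a multiple of $m$, which is impossible in an $m$-set. So suppose $G$ is a pure $(2n+1)$-sparse gapset of genus $g=3n+2$ that is symmetric. Then $\ell_g = F(G) = 2g-1 = 6n+3$, and by Theorem \ref{Corollary 4.11} we have $PF(G) = \{\ell_g\}$.

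Next I would pin down $m$ by the same mechanism used in Propositions \ref{m=2n} and \ref{m=2n+1}. Since $\ell_{g-1} \notin PF(G)$, there is a non-gap $s \neq 0$ with $\ell_{g-1} + s \in G$; because $\ell_g$ is the only element of $G$ strictly larger than $\ell_{g-1}$, we must have $\ell_{g-1} + s = \ell_g$, i.e. $s = \ell_g - \ell_{g-1}$. Purity of the gapset gives $s \leq 2n+1$, while $s$ being a non-gap together with Proposition \ref{km} gives $s \geq m \geq 2n+1$. Hence $m = 2n+1$ (and $s = m$).

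Finally comes the crux of the argument: substituting $m = 2n+1$ into $\ell_g = 6n+3$ yields $\ell_g = 3(2n+1) = 3m$, so $\ell_g \in m\N$. But every gapset is an $m$-set, hence $G \cap m\N = \emptyset$, contradicting $\ell_g \in G$. This contradiction shows that no symmetric gapset can occur in $\mathcal{G}_{2n+1}(3n+2)$.

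The step I expect to carry all the content is this last divisibility observation, not the extraction of $PF(G)$ or the bounding of $s$, which are routine adaptations of the preceding results. Indeed, the coincidence $2g-1 = 3m$ is exactly what separates this family from the pure $(2n)$-sparse family of Section \ref{sec3}: there the forced multiplicity $m = 2n$ gives $2g-1 = 6n+1 = 3m+1 \notin m\N$, so $\ell_g$ is an admissible gap and symmetric gapsets do exist (Theorem \ref{contagem G simetrico q=4}), whereas here the forced value $m = 2n+1$ makes $2g-1$ land precisely on the multiple $3m$, ruling symmetry out entirely.
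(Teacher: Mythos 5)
Your proof is correct and follows essentially the same route as the paper's: symmetry gives $PF(G)=\{\ell_g\}$ via Theorem \ref{Corollary 4.11}, the squeeze $s=\ell_g-\ell_{g-1}\leq 2n+1\leq m\leq s$ (sparseness on one side, Proposition \ref{km} and minimality of $m$ on the other) forces $s=m=2n+1$, and then a multiple of $m$ lands in $G$. The only cosmetic difference is where you read off the contradiction — you use $\ell_g=6n+3=3m\in m\N$, while the paper uses $\ell_{g-1}=\ell_g-s=4n+2=2m$ — which are interchangeable, since both rest on the same fact that a gapset, being an $m$-set, meets no multiple of its multiplicity.
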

\begin{proof}
 Suppose that $G$ is symmetric. By Theorem \ref{Corollary 4.11} we hare  $PF(G)=\{\ell_g\}$, then exist $s\neq 0 $ non-gap such that $\ell_{g-1}+s=\ell_g$. Therefore,
 $$s= \ell_g-\ell_{g-1}\leq 2n+1=  m\leq s.$$
Then, we obtain that $\ell_{g-1}= 4n+2=2m$ what is a contradiction.
\end{proof}

\begin{corollary}\label{pseudo-symmetric então q=3}
     Let $G $ be a pure $(2n+1)-$sparse  gapset with genus $g$ and depth $q $. Then $ q\leq 3$. In particular, if $G$ is pseudo-symmetric, then $q=3$
\end{corollary}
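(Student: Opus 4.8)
The plan is to bound the depth $q$ using the same conductor estimate that appeared earlier, and then upgrade the bound to equality in the pseudo-symmetric case. First I would recall that by Proposition~\ref{km} we have $m \geq 2n+1$, and that any non-empty gapset of genus $g$ satisfies $G \subseteq [1, 2g-1]$, so $\ell_g \leq 2g-1 = 6n+3$ and hence the conductor $c(G) = \ell_g + 1 \leq 6n+4$. Combining these with the definition $q = \lceil c(G)/m \rceil$, I would compute
\begin{equation*}
q = \left\lceil \frac{c(G)}{m} \right\rceil \leq \left\lceil \frac{6n+4}{2n+1} \right\rceil.
\end{equation*}
The quantity $\frac{6n+4}{2n+1} = 3 + \frac{1}{2n+1}$ lies strictly between $3$ and $4$ for every positive integer $n$, so its ceiling is exactly $4$; this only yields $q \leq 4$ at first, matching the shape of the corollary proved for the genus-$3n+1$ case. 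To sharpen this to $q \leq 3$, I would invoke Proposition~\ref{nao tem simetrico}: a symmetric gapset is precisely one with $\ell_g = 2g-1$, and the argument there rules this out, so in fact $\ell_g \leq 2g-2 = 6n+2$, giving $c(G) \leq 6n+3$ and
\begin{equation*}
q \leq \left\lceil \frac{6n+3}{2n+1} \right\rceil = 3,
\end{equation*}
since $\frac{6n+3}{2n+1} = 3$ exactly.

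For the second assertion, suppose $G$ is pseudo-symmetric. Then by definition $\ell_g = F(G) = 2g-2 = 6n+2$, so $c(G) = 6n+3$ and the computation above already forces $q = 3$ exactly (the ceiling of the integer $3$ is $3$, and $q \geq 1$ together with $q \leq 3$ leaves only the possibility that it equals $3$ once we check it cannot drop lower). To see $q$ cannot be smaller than $3$, I would note that a depth $q \leq 2$ gapset has $c(G) \leq 2m - 1 \leq 2(2n+1) - 1 = 4n+1 < 6n+3 = c(G)$ for $n \geq 1$, a contradiction; alternatively, depth $2$ would force $\ell_g \leq 2m-1$, incompatible with $\ell_g = 6n+2$ when $m$ is as small as $2n+1$ (and $m$ cannot be much larger, since by Proposition~\ref{mg} we have $m \leq g+1$, but more directly $\ell_g = 6n+2 > 2m-1$ would require $m \geq 3n+2$, which combined with the partition structure is ruled out). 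I would present whichever of these two routes is cleanest, and I expect the quickest is the direct conductor bound $c(G) = 6n+3 > 2m-1$.

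The main obstacle, such as it is, lies in the upgrade from $q \leq 4$ to $q \leq 3$: the naive conductor bound permits $q = 4$, and one genuinely needs the exclusion of symmetric gapsets (Proposition~\ref{nao tem simetrico}) to eliminate the borderline case $\ell_g = 2g-1$. Everything else is a routine ceiling computation, so care is needed only to state clearly that the strict inequality $\ell_g < 2g-1$ furnished by Proposition~\ref{nao tem simetrico} is what collapses the estimate from $4$ to $3$. I would therefore structure the proof so that the appeal to the non-existence of symmetric gapsets is explicit and does the decisive work.
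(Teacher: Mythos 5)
Your derivation of the bound $q\leq 3$ is exactly the paper's argument: Proposition \ref{km} gives $m\geq 2n+1$, Proposition \ref{nao tem  simetrico} excludes symmetric gapsets and hence gives $c(G)\leq 2g-1=6n+3$, and the ceiling computation $\left\lceil \frac{6n+3}{2n+1}\right\rceil=3$ finishes; your preliminary detour through $q\leq 4$ is harmless exposition. You also correctly identify that the exclusion of the symmetric case is the decisive ingredient.

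The flaw is in your argument that $q$ cannot be smaller than $3$ in the pseudo-symmetric case. The chain $c(G)\leq 2m-1\leq 2(2n+1)-1=4n+1$ is inverted: Proposition \ref{km} gives $m\geq 2n+1$, so $2m-1\geq 4n+1$, and a priori a \emph{large} multiplicity could make depth $2$ possible. Indeed, $q\leq 2$ forces only $\ell_g\leq 2m-1$, i.e.\ $m\geq 3n+2$, and the genus bound $m\leq g+1=3n+3$ of Proposition \ref{mg} does not exclude $m\in\{3n+2,3n+3\}$; your parenthetical claim that this is ``ruled out by the partition structure'' is precisely the step that needs proof and is left unproven (one would have to argue, e.g., that $m=3n+2$ forces $G=[1,3n+1]\cup\{\ell_g\}$ with jump $\ell_g-(3n+1)=2n+1$, giving $\ell_g=5n+2\neq 6n+2$, and that $m=g+1$ gives the ordinary gapset, which is pure $1$-sparse). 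The clean repair is one citation away: Proposition \ref{m=2n+1}, proved immediately before this corollary, shows that a pseudo-symmetric pure $(2n+1)$-sparse gapset has $m=2n+1$ exactly; with that, $2m-1=4n+1<6n+2=\ell_g$ rules out depth at most $2$, and $q=\left\lceil \frac{6n+3}{2n+1}\right\rceil=3$. Note that the paper's own proof silently makes the same substitution $m=2n+1$ when writing $q=\lceil c/m\rceil=\lceil (6n+3)/(2n+1)\rceil$, so it too rests on Proposition \ref{m=2n+1}; your proof becomes correct, and essentially identical to the paper's, once that citation replaces the misdirected inequality.
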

\begin{proof}
    By Proposition \ref{km} we have $2n+1\leq m$, so $ \frac{1}{m}\leq \frac{1}{2n+1}$. By Proposition \ref{nao tem  simetrico} we have $c:=c(G)\leq 2g-1=6n+3$. Then
    $$ \displaystyle  q= \lceil \frac{c}{m} \rceil\leq \lceil \frac{6n+3}{2n+1} \rceil=3.$$

    In particular, if $\ell_g=2g-2$ we have that
   $$ \displaystyle  q= \lceil \frac{c}{m} \rceil= \lceil \frac{6n+3}{2n+1} \rceil=3.$$
\end{proof}

\begin{remark}
The converse of the previous result is not true.  The gapset $G=\{1,2,3,4,6,7,8,13\}$ is pure $5-$sparse gapset with genus $8$ e depth $3$, however $\ell_g=2g-3$, i.e., $G$ is not pseudo-symmetric.
\end{remark}

\begin{example} For $n=1$ we have $g=3n+2=5$ and $ 2n+1=3$.  The pure 3-sparse gapsets with genus 5 are:
\begin{itemize}
    \item   $ \{1, 2, 3, 4, 7 \}$,
    \item   $ \{1, 2, 3, 6,7 \} $,
    \item   $ \{1, 2, 4, 5, 8 \} $.
    
\end{itemize}    
\end{example}

 The following result we give a characterization for all pseudo-symmetric gapsets in $ \mathcal{G}_{2n+1}(3n+2)$.

\begin{proposition}\label{carac psedo }
      Let $G $ be a pure $(2n+1)-$sparse  gapset with genus $g$ and pseudo-symmetric.  Then $G_2=\{\ell_g\}$ and $\#G_1=n+1$, where $G=G_0\cup G_1\cup G_2$. In particular, $\alpha=g-1$, $\ell_{g-1}=2m-1 $ and $\ell_{g}=3m-1 $.
\end{proposition}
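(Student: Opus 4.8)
The plan is to combine the three structural facts already available for pseudo-symmetric members of this family. By Corollary~\ref{pseudo-symmetric então q=3} we have $q(G)=3$, so $G=G_0\cup G_1\cup G_2$; by Proposition~\ref{m=2n+1} we have $m=2n+1$; and by Theorem~\ref{Corollary 4.16} we have $PF(G)=\{\ell_g,\tfrac{\ell_g}{2}\}$ with $\ell_g=2g-2=6n+2$. From $m=2n+1$ one reads off at once that $\ell_g=6n+2=3m-1$ and $\tfrac{\ell_g}{2}=3n+1$, and the canonical partition splits as $G_0=[1,2n]$, $G_1\subseteq[2n+2,4n+1]$ and $G_2\subseteq[4n+3,6n+2]$.

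First I would determine $G_2$. By Theorem~\ref{cota para tipo de G} we have $G_2\subseteq PF(G)$ and $\#G_2\le\tau(G)=2$. Since $\tfrac{\ell_g}{2}=3n+1<4n+3=2m+1$, the second pseudo-Frobenius number falls inside the block of $G_1$ and cannot lie in $G_2$; as $\ell_g\in G_2$, this forces $G_2=\{\ell_g\}$.

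Next I would pin down $\ell_{g-1}$. Because the gap $\ell_g-\tfrac{\ell_g}{2}=3n+1$ strictly exceeds the sparseness bound $2n+1$, $(2n+1)$-sparseness guarantees an element of $G$ strictly between $\tfrac{\ell_g}{2}$ and $\ell_g$; taking the largest element below $\ell_g$ yields $\tfrac{\ell_g}{2}<\ell_{g-1}<\ell_g$, so $\ell_{g-1}\notin PF(G)$. Hence there is a non-gap $s\neq0$ with $\ell_{g-1}+s=\ell_g$, and the squeeze $m\le s=\ell_g-\ell_{g-1}\le 2n+1=m$ gives $s=m$ and $\ell_{g-1}=\ell_g-m=2m-1$. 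In particular $\ell_g-\ell_{g-1}=m=2n+1$, so the index $g-1$ realizes a jump of size $2n+1$; since $\alpha=\max\{i:\ell_{i+1}-\ell_i=2n+1\}\le g-1$ trivially, we conclude $\alpha=g-1$.

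The cardinality of $G_1$ is then forced by counting: $\#G=g=3n+2$, while $\#G_0=m-1=2n$ and $\#G_2=1$, so $\#G_1=(3n+2)-2n-1=n+1$. I expect no genuine obstacle here; the only point needing care is the verification that $\tfrac{\ell_g}{2}$ lies outside the $G_2$-block and strictly below $\ell_{g-1}$, which is precisely where the strict inequality $3n+1>2n+1$ (valid for $n\ge1$) enters.
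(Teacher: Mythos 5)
Your proof is correct and takes essentially the same route as the paper: both invoke Proposition~\ref{m=2n+1}, Corollary~\ref{pseudo-symmetric então q=3} and Theorem~\ref{Corollary 4.16}, use the sparseness bound $\ell_g-\tfrac{\ell_g}{2}=3n+1>2n+1$ to conclude $\tfrac{\ell_g}{2}<\ell_{g-1}<\ell_g$ and hence $\ell_{g-1}\notin PF(G)$, then apply the squeeze $m\le s=\ell_g-\ell_{g-1}\le 2n+1=m$ to get $\ell_{g-1}=2m-1$, and finish by counting. The only cosmetic difference is that you establish $G_2=\{\ell_g\}$ up front via Theorem~\ref{cota para tipo de G} together with $\tfrac{\ell_g}{2}=3n+1<2m+1$, whereas the paper reads $G_2=\{\ell_g\}$ off at the end from $\ell_{g-1}=2m-1$; both are valid, and yours makes that sub-step slightly more explicit.
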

\begin{proof}
Since $G $ is pseudo-symmetric we now that $PF(G)=\{\ell_g, \frac{\ell_g}{2}\}=\{6n+2,3n+1\}$ by Theorem \ref{Corollary 4.16}. Since $G $ is pseudo-symmetric we have $\ell_g=2g-2$ and by Corollary \ref{pseudo-symmetric então q=3} we have $G=G_0\cup G_1\cup G_2$. So $\ell_g-\frac{\ell_g}{2}=3n+1>2n+1 $ we have that $\ell_{g-1}$ can not be pseudo-Frobenius number, so exists a non-gap $s\neq 0$ such that $ \ell_{g-1}+s=\ell_g$, then $$ s=\ell_g -\ell_{g-1}\leq 2n+1=m\leq s$$
Therefore, $\ell_{g-1}=\ell_g-(2n+1)=6n+2-(2n+1)=2m-1. $ So we conclude that $G_2=\{\ell_g\}$, $G_1=\{\ell_1,\ldots,\ell_n,\ell_{g-1}\}$.
\end{proof}

\begin{theorem}\label{cont p-symmetric}
    The quantity of  pseudo-symmetric gapsets in $\mathcal{G}_{2n+1}(3n+2)$ is $2^{n-1}.$
\end{theorem}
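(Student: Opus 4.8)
The plan is to imitate the counting argument used for the symmetric case in Theorem~\ref{contagem G simetrico q=4}, adapting it to the rigid structure supplied by Proposition~\ref{carac psedo }. First I would record the skeleton of an arbitrary pseudo-symmetric $G\in\mathcal{G}_{2n+1}(3n+2)$. By Proposition~\ref{carac psedo } we have $m=2n+1$ and $G=G_0\cup G_1\cup\{\ell_g\}$ with $G_0=[1,2n]$, $\ell_g=6n+2$, $\ell_{g-1}=4n+1\in G_1$, $\#G_1=n+1$ and $G_1\subseteq[m+1,2m-1]=[2n+2,4n+1]$. Since $G$ is pseudo-symmetric, Theorem~\ref{Corollary 4.16} gives $PF(G)=\{\ell_g,\ell_g/2\}$, so the midpoint $\ell_g/2=3n+1$ is a gap lying in the window of $G_1$; hence $\{3n+1,4n+1\}\subseteq G_1$ are forced.

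The heart of the argument is to extract the combinatorial constraint hidden in the gapset axiom applied to $z=\ell_g=6n+2$. I would organize the interval $[2n+2,4n+1]$ by the involution $x\mapsto \ell_g-x=6n+2-x$: the point $3n+1$ is fixed, the point $4n+1$ is sent to the non-gap $m=2n+1$ (so it stands alone inside the interval), and the remaining $2n-2$ points split into the $n-1$ pairs $\{x,6n+2-x\}$ with $x\in[2n+2,3n]$. For such a pair both members lie in the $G_1$-window, so writing $\ell_g=x+(6n+2-x)$ the gapset axiom forces at least one of them to be a gap, i.e.\ to lie in $G_1$. Combining this with $\#G_1=n+1$ and the two forced elements $3n+1,4n+1$ leaves exactly $n-1$ further elements to be placed among the $n-1$ pairs with at least one per pair; a pigeonhole step upgrades this to \emph{exactly} one per pair, yielding precisely $2^{n-1}$ admissible sets $G_1$.

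Finally I would verify the converse, that every such choice genuinely produces a pseudo-symmetric pure $(2n+1)$-sparse gapset of genus $3n+2$, in the spirit of Claim~2 in Theorem~\ref{contagem G simetrico q=4}. For $G'=[1,2n]\cup G_1'\cup\{6n+2\}$, the $m$-set property and the genus $3n+2$ are immediate, and purity holds because the only jump of size $2n+1$ is $\ell_{g-1}\to\ell_g$, while the $G_0$--$G_1'$ gap is at most $n+1$ and the internal gaps of $G_1'$ are at most $2n-1$, both strictly below $2n+1$ for $n\geq 1$. The gapset axiom is trivial for $z\in[1,m-1]$ and for $z\in G_1'\subseteq[m+1,2m-1]$ (the smaller summand is at most $m-1$ and so lands in $G_0$), while the case $z=\ell_g$ is exactly the pairing condition built into $G_1'$. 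Pseudo-symmetry then follows by computing $PF(G')$ directly: $\ell_g\in PF(G')$ by Theorem~\ref{cota para tipo de G}; the midpoint $3n+1$ is pseudo-Frobenius because any non-gap $s\neq 0$ satisfies $3n+1+s>4n+1$ and $3n+1+s\neq 6n+2$ (as $s\neq 3n+1$), so $3n+1+s$ is a non-gap; and every other $x\in G_1'$ fails to be pseudo-Frobenius since its partner $6n+2-x$ (respectively $m$, for $x=4n+1$) is a non-gap with $x+(6n+2-x)=\ell_g\in G'$. Thus $PF(G')=\{\ell_g,\ell_g/2\}$, and $G'$ is pseudo-symmetric.

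The main obstacle I expect is the bookkeeping in this last step: one must deduce both the gapset axiom for $z=\ell_g$ and the exact identity $PF(G')=\{\ell_g,\ell_g/2\}$ cleanly from the involution, and confirm purity uniformly over all admissible configurations, including the extreme choices of $G_1'$. The short cardinality-versus-covering argument that converts ``at least one per pair'' into ``exactly one per pair'' is the logical linchpin that pins the count to $2^{n-1}$.
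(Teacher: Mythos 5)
Your proposal is correct and follows essentially the same route as the paper: the structural data of Proposition \ref{carac psedo } (forcing $m=2n+1$, $\ell_g=6n+2$, $3n+1,4n+1\in G_1$, $\#G_1=n+1$), the involution $x\mapsto \ell_g-x$ splitting $[2n+2,4n]\setminus\{3n+1\}$ into $n-1$ pairs with exactly one gap per pair (hence $2^{n-1}$ choices), and a case-by-case check that each resulting set is a gapset. The only immaterial difference is directional: you derive ``exactly one per pair'' from the gapset axiom (``at least one'') plus the cardinality of $G_1$, while the paper uses the non-pseudo-Frobenius property (``at most one'') closed by the same count, and your explicit computation of $PF(G')$ is redundant since pseudo-symmetry is automatic from $\ell_g=2g-2$ once $G'$ is a gapset.
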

\begin{proof}
 Let $G $ be a pure $(2n+1)-$sparse  gapset with genus $g$, pseudo-symmetric.  As $G $ is pseudo-symmetric we know that $\ell_g=2g-2$, by Proposition \ref{m=2n+1}  we have $m=2n+1$ and $PF(G)=\{\ell_g, \frac{\ell_g}{2}\}$ by Theorem \ref{Corollary 4.16}. By Proposition  \ref{carac psedo } we have $G=G_0\cup G_1\cup G_2$ where $G_2=\{\ell_g\}=\{3m-1\}$, $\ell_{g-1}=2m+1\in G_1$ and $\#G_1=n+1$. By Proposition  \ref{carac psedo } $2m-1\in G_1$ and the fact that $ \frac{\ell_g}{2}= 3n+1\in G_1$  we have, for each $n\in \mathbb{N}$, $n-1$ possibilities choices for gaps in $G_1\subset [m+1,2m-1] $.

Since $G$ is pseudo-symmetric and $PF(G)=\{\ell_g,\frac{\ell_g}{2}\}$, any gap other than $\ell_g$ and $\frac{\ell_g}{2}$ is a non pseudo-Frobenius number. In particular, for each  non pseudo-Frobenius number $x\in [m+2,2m-1]$ has the following property:
 
   \begin{equation}\label{eq3}
       x,y\in [m+1,2m-2]\setminus \{\frac{\ell_g}{2}\}\ such\ that  \ x+y=\ell_g
   \end{equation}

\begin{claim}\label{claim3}
   For each $x\in [m+1,2m-2]=[2n+2,4n]$ there exists $y\in [m+1,2m-2]$ satisfying equation and the number of entries for x is $n-1$.
\end{claim}
 \begin{proof}
     In fact, since $x\in [m+2,2m-2]$ we can write it as $ x=2n+ 2+i$ where $i=0,\ldots,2n-2$ then $6n+2=\ell_g=x+y=2n+2+i+y \Rightarrow y=4n-i$. Since $0\leq i\leq 2n-2$ then $ 2n+2\leq y\leq 4n$, i.e., $y\in [m+1,2m-2]$.
 \end{proof}

In the next, we define the following set:
\begin{equation}\label{eqG'}
    G'=[1,m-1]\cup G'_1\cup \{3m-1\} ,
\end{equation}
where $G'_1:=X\cup \{\frac{\ell_g}{2}, 2m-1\}$,  $m=2n+1$ and $X\subset [m+1,2m-2]\setminus \{\frac{\ell_g}{2}\}$ with the following property:

\begin{equation}\label{eqXx}
  x\in X \Leftrightarrow \ell_g-x \not\in X.  
\end{equation}

Note by Claim \ref{claim3} and by construction  of the $G'_1$ has $n+1$ elementos. Therefore, $\#G'=3n+1.$ We have $2^{n-1}$ possibilities to build $G_1'$ and therefore for $G'$. Indeed, the number of possibilities for $G_1'$ defined by the property (\ref{eqXx}) is
\begin{equation*}
\frac{\prod_{i=1}^{n-1}(2n-2i)}{(n-1)!}=2^{n-1}.    
\end{equation*}
Note that any pseudo-symmetric pure $(2n+1)-$sparse gapset with genus $g=3n+2$ is like $G'$.

\begin{claim}\label{claim4}
     $G'$ is a gapset. 
\end{claim}
In fact, first we note that by construction $m,2m,3m\not\in G$. Therefore, $G$ is $m-$set. Let $z\in G$ and write $z=x+y$.
We consider three cases as follows:
\begin{enumerate}
\item If $z\in [1,m-1]$. In this case, both $x,y\in G$.

\item If $z\in G'_1$,  $    \subseteq [m+1, 2m-1]$. In this case, $2x \leq x + y = z \leq2m -1$. Thus, $x \leq m-1$ and $x \in G'_0$.


    

\item If $z=3m-1$. Note that $ x,y\geq 2m \Rightarrow x+y\geq 4m$ , so we can conclude without loss of generality that  $ x\leq 2m-1$.

\begin{itemize}
    \item If  $x\leq m-1$, then $x\in G_0'$.

    \item If  $x=m$, then $y=2m-1\in G'$

    \item If $x\in [m+1,2m-2]$. Since $y=\ell_g-x $ we have by Claim \ref{claim3} and by property (\ref{eqXx}) that $x\in G' \Leftrightarrow y\not\in G'$ and conversely $x\not\in G' \Leftrightarrow y\in G'$ .
    
\end{itemize}
Therefore, $G'$ is a gapset.
\end{enumerate}

\end{proof}

    
    

\section{The cardinality of the $\mathcal{G}_{2n}(3n+1)$ and $\mathcal{G}_{2n+1}(3n+2)$ }\label{sec5}

We remember that we denote by $\mathcal{M}_\k(g)$ the set of all $m$-sets with $g$ elements, such that the maximum distance between two consecutive elements (with respect to the natural order) is $\k$ and that lies on $[1,2g - 1]$. 
 If $c$ is the largest element of an $m-$set $M$, then
 its depth is defined as $\lceil\frac{c}{m} \rceil$, denoted by $q(M)$.
We denote by $\mathcal{C}_{\k}(g)$ the set of all subsets of $[1,2g-1]$ that have $g$ elements and such that the maximum distance between two consecutive elements (with respect to the natural order) is $\k$.  Let $\mathcal{G}_\k(g,q(G)\leq q):=\{G\in 
 \mathcal{G}_\k(g)|\ q(G)\leq q\}$ be the set of pure $\k-$sparse gapsets of genus g and depth at most $q$.

In this section, we introduce the map $\sigma$ and we discuss which properties the set $\sigma(G)$ has, when $G \in \mathcal{G}_{2n}(3n+1)$. Our aim is that $\sigma(G)$ is a pure $(2n+1)$-sparse gapset with genus $3n+2$. 


Let $n$ be a non-negative integer and $g=3n+1$. We define $\sigma: \mathcal{G}_{2n}(g) \to \mathcal{C}_{2n+1}(g+1)$, with 
\begin{align*}
    G=&\{1=\ell_1<\ell_2<\cdots<\ell_\alpha<\ell_{\alpha+1}<\cdots<\ell_g\} \mapsto \\
    \sigma(G)=&\{1=\ell_0< 2=\ell_1+1<\ell_2+1<\cdots<\ell_\alpha+1<\ell_{\alpha+1}+2<\cdots<\ell_g+2\},
\end{align*}
where $ \alpha=max\{i|\ \ell_{i+1}-\ell_{i}=2n\}$.

Throughout the section,   $n$ denote a  positive integer and $g=3n+1$.

\begin{theorem}\label{fi injetiva}
Let $n$ and $q$ be  non-negative integers.  The function $\sigma: \mathcal{G}_{2n}(g, q \leq 3) \to \mathcal{C}_{2n+1}(g+1)$ is well defined and injective. 
\begin{enumerate}
\item[i)] If $G \in \mathcal{G}_{2n}(g, 2)$ with multiplicity $m$, then $\sigma(G)$ is an $(m + 1)-$set of depth 2;

\item[ii)] If $G \in \mathcal{G}_{2n}(g,3)$ with multiplicity $m$, then $\sigma(G)$ is an $(m + 1)-$set of depth 3.
\end{enumerate}
\end{theorem}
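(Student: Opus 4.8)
The plan is to establish the three assertions in turn: that $\sigma(G)$ genuinely lies in $\mathcal{C}_{2n+1}(g+1)$ (well-definedness), that $\sigma$ is injective, and then to read off the multiplicity and depth of $\sigma(G)$ in the two depth regimes.

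First I would check well-definedness. By construction $\sigma(G)$ is obtained from $G$ by inserting the new entry $\ell_0=1$, shifting $\ell_1,\dots,\ell_\alpha$ up by $1$, and shifting $\ell_{\alpha+1},\dots,\ell_g$ up by $2$; hence $\#\sigma(G)=g+1$. Its largest entry is $\ell_g+2$, and since $G$ is a gapset of genus $g$ we have $\ell_g\leq 2g-1$, so $\ell_g+2\leq 2g+1=2(g+1)-1$ and $\sigma(G)\subseteq[1,2(g+1)-1]$. For the maximal gap I would list the consecutive differences of $\sigma(G)$: the difference $(\ell_1+1)-\ell_0=1$; the differences $\ell_{i+1}-\ell_i$ for $1\leq i\leq\alpha-1$ and for $\alpha+1\leq i\leq g-1$, each at most $2n$ because $G$ is $(2n)$-sparse; and the single enlarged difference $(\ell_{\alpha+1}+2)-(\ell_\alpha+1)=(\ell_{\alpha+1}-\ell_\alpha)+1=2n+1$ across the jump at $\alpha$. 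Thus the maximum distance is exactly $2n+1$, so $\sigma(G)\in\mathcal{C}_{2n+1}(g+1)$.

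For injectivity I would exhibit the inverse explicitly. In $\sigma(G)$ the value $2n+1$ occurs exactly once as a consecutive difference (all other consecutive differences are at most $2n$), namely across the jump, so from the set $\sigma(G)$ alone one recovers the location of the jump; deleting the least entry $1$, subtracting $1$ from the entries preceding the jump and $2$ from those following it returns $G$. Hence $\sigma(G)=\sigma(G')$ forces $G=G'$.

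The core of the argument is the multiplicity/depth analysis. Writing $m$ for the multiplicity of $G$, the gaps $1,\dots,m-1$ occupy positions $\ell_i=i$ for $i\leq m-1$, and since the first difference of size $2n\geq 2$ cannot occur before position $m-1$ we have $\alpha\geq m-1$; consequently $\{1\}\cup\{\ell_i+1:i\leq m-1\}=[1,m]\subseteq\sigma(G)$. Next I would show $m+1\notin\sigma(G)$: it cannot equal $\ell_i+1$ (that would force $\ell_i=m\notin G$) nor $\ell_j+2$ with $j\geq\alpha+1$ (that would force $\ell_j=m-1$, impossible since $\ell_j>\ell_\alpha\geq\ell_{m-1}=m-1$), so the multiplicity of $\sigma(G)$ is $m+1$. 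For the depth, the largest entry $\ell_g+2$ together with $\ell_g\leq 2m-1$ (case $q=2$) or $\ell_g\leq 3m-1$ (case $q=3$) gives $\lceil(\ell_g+2)/(m+1)\rceil=2$ and $3$ respectively. Finally, to conclude $\sigma(G)$ is an $(m+1)$-set I must exclude every multiple of $m+1$ in its range: when $q=2$ only $m+1$ lies in range and is already excluded; when $q=3$ the extra candidate is $2m+2=2(m+1)$, and this is the main obstacle, handled by Proposition \ref{ell alfa 2m-1}, which gives $\ell_\alpha\leq 2m-1$, so no entry $\ell_i+1$ with $i\leq\alpha$ equals $2m+2$, while $\ell_j+2=2m+2$ would force $\ell_j=2m\in m\N$, contradicting that $G$ is an $m$-set. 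This last exclusion is the delicate point, since it is exactly where the hypothesis $q\leq 3$ is used.
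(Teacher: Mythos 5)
Your proof is correct and follows essentially the same route as the paper: well-definedness via counting the consecutive differences (with the unique enlarged difference $2n+1$ at the jump), injectivity via the recoverability of the jump position, and the $(m+1)$-set/depth analysis using the bounds on $\ell_g$ from $q(G)$, with Proposition \ref{ell alfa 2m-1} excluding $2(m+1)$ in the depth-3 case exactly as the paper does. Your only deviations are presentational improvements: you exhibit an explicit inverse instead of the paper's contradiction argument on two jump indices, and you justify two points the paper leaves implicit, namely $\sigma(G)\subseteq[1,2(g+1)-1]$ and $\alpha\geq m-1$ (which underlies the claim $[1,m]\subseteq\sigma(G)$).
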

\begin{proof}
By construction of the $\sigma(G)$ we have that  the maximum distance between
two consecutive elements in $\sigma(G)$ is $(\ell_{\alpha+1} + 2) - (\ell_\alpha + 1) = 2n + 1$ and there are $g + 1$ elements in $\sigma(G)$.

 It remains to solve the cases $q = 2$ and $q = 3$. Let $G=\{1=\ell_1<\ell_2<\cdots<\ell_\alpha<\ell_{\alpha+1}<\cdots<\ell_g\}$  with $m(G) = m$; in particular, $[1, m - 1] \subset G$ and $m \not\in G$ implies that $[1,m]\subset \sigma(G)$ and $m+1\not\in \sigma(G).$

 If $q=2$ , then $ m + 1 \leq\ell_g \leq 2m - 1$ and the maximum element of $\sigma(G)$, $\ell_g + 2$, is such that $(m + 1) + 1 < m + 3\leq \ell_g + 2 \leq 2m + 1 = 2(m + 1) - 1$. Thus, $2(m + 1) \not\in \sigma(G)$, which guarantees that $\sigma(G)$ is an $(m + 1)-$set with depth 2.

  If $q=3$,  we just need to prove that $2(m+1),3(m+1) \not\in \sigma(G)$. If $2(m+1) =2m+2\in \sigma(G)$ then $ 2m\in G$ with $ \ell_\alpha<2m$ or  $ 2m+1\in G$ with $ \ell_\alpha>2m+1$. But, the first condition never happens and the Proposition \ref{ell alfa 2m-1} ensures that  $\ell_\alpha\leq 2m-1 $ the second condition is also not met. Thus, $2(m + 1)  \not\in \sigma(G)$.  Now notice that $2m + 1  \leq \ell_g   \leq 3m - 1 $ and the maximum element of $ \sigma(G)$, namely $\ell_g  + 2$, is such that $2(m + 1) + 1 = 2m + 3  \leq \ell_g  + 2  \leq 3m + 1 < 3(m + 1) - 1$. Thus, $3(m + 1)  \not\in \sigma(G)$,  which guarantees that $\sigma(G)$ is an $(m + 1)-$set with depth 3.

Now we prove that $\sigma$ is injective. Let $G' = \{a_1 < a_2 < \cdots < a_{\alpha} < a_{\alpha+1} < \cdots < a_g\}$ and $\widetilde{G} = \{b_1 < b_2 < \cdots < b_{\beta} < b_{\beta+1} < \cdots <
b_g\} \in  \mathcal{G}_{2n}(g)$ such that $\sigma(G') =\sigma(\widetilde{G})$, where $\alpha = max\{i : a_{i+1} - a_{i} =2n\}$ 
and $\beta = max\{i :b_{i+1} - b_{i}  = 2n\}$. First, we prove that $\alpha = \beta$. 
Otherwise, without loss of generality, suppose that $\alpha < \beta$.
Then $a_s = b_s$, for all $s \in  [1, \alpha]\cup[\beta+1, g]$ and $a_{\beta}+2 = b_{\beta}+1$.  Then, $ 2n = b_{\beta+1}-b_{\beta} = a_{\beta+1}-(a_{\beta}+1)$, i.e., $ 2n+1 = a_{\beta+1}-a_{\beta}$ 
which is a contradiction. Hence, $\alpha =\beta$ and $ G' = \widetilde{G}$.

\end{proof}

Now, we look at pure $(2n)-$sparse gapsets with genus $g=3n+1$ and depth $2$ and $3$, namely
$\mathcal{G}_{2n} (g, 2):=\{G \in \mathcal{G}_{2n} (g) :\ q(G) = 2\}$ and $\mathcal{G}_{2n} (g, 3):=\{G \in \mathcal{G}_{2n}(g) :\ q(G) = 3\}$,
respectively. Theorem \ref{fi injetiva} guarantees that gapsets with multiplicity $m$ and depth at most $3$ are mapped onto $(m + 1)-$sets with depth at most 3. Now we look for conditions that ensure that those $ (m + 1)-$sets are in fact gapsets.

\begin{proposition}\label{Gq2}
 Let $n$ and $g=3n+1$ be non-negative integers and $\sigma_2:=\sigma|\mathcal{G}_{2n} (g,2)$. Then
$Im(\sigma_2) \subset \mathcal{G}_{2n+1}(g + 1, 2)$.
\end{proposition}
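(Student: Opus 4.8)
The plan is to leverage part (i) of Theorem \ref{fi injetiva} as the structural backbone and then promote the resulting set to a genuine gapset via Proposition \ref{q=2}. First I would fix an arbitrary $G \in \mathcal{G}_{2n}(g,2)$ with multiplicity $m$ and recall everything that Theorem \ref{fi injetiva} already provides for $\sigma_2(G) = \sigma(G)$: it is an $(m+1)$-set of depth $2$, it has exactly $g+1$ elements, and the maximum distance between two consecutive elements equals $(\ell_{\alpha+1}+2)-(\ell_\alpha+1) = 2n+1$. Consequently the genus ($g+1 = 3n+2$), the purity (maximal gap exactly $2n+1$), and the depth ($2$) of $\sigma(G)$ are all already in hand; the single missing ingredient is that $\sigma(G)$ is in fact a gapset rather than merely an $m$-set.

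To supply that ingredient I would invoke Proposition \ref{q=2} with multiplicity $m+1$. Since $\sigma(G)$ has depth $2$, its largest element is at most $2(m+1)-1$, so $\sigma(G) \subseteq [1, 2(m+1)-1]$; the $(m+1)$-set property gives the containment $[1,(m+1)-1] = [1,m] \subseteq \sigma(G)$ together with $m+1 \notin \sigma(G)$. These are precisely the three hypotheses of Proposition \ref{q=2}, which therefore yields at once that $\sigma(G)$ is a gapset with multiplicity $m+1$ and depth at most $2$. Comparing with the depth-$2$ assertion of Theorem \ref{fi injetiva}(i) shows the depth is exactly $2$.

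Assembling these facts, $\sigma(G)$ is a gapset of genus $3n+2$ whose maximal distance between consecutive elements is $2n+1$, hence a pure $(2n+1)$-sparse gapset of depth $2$, that is, $\sigma(G) \in \mathcal{G}_{2n+1}(g+1,2)$. As $G$ was arbitrary, this gives $\mathrm{Im}(\sigma_2) \subseteq \mathcal{G}_{2n+1}(g+1,2)$. I expect no serious obstacle here: the depth-$2$ case is the easy one precisely because Proposition \ref{q=2} guarantees that every $(m+1)$-set of depth at most $2$ is automatically a gapset, so no case analysis on a decomposition $z = x+y$ is required (unlike the depth-$3$ analogue, where the gapset property must be checked by hand). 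The only point needing a little care is verifying the containment $\sigma(G) \subseteq [1,2(m+1)-1]$, which follows directly from the depth bound already established in Theorem \ref{fi injetiva}.
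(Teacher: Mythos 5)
Your proposal is correct and follows essentially the same route as the paper: both obtain from Theorem \ref{fi injetiva}(i) that $\sigma(G)$ is an $(m+1)$-set of depth $2$ with $g+1$ elements and maximal consecutive distance $2n+1$, and then apply Proposition \ref{q=2} to upgrade this to the gapset property, concluding $\sigma(G)\in\mathcal{G}_{2n+1}(g+1,2)$. Your write-up is merely a bit more explicit than the paper's in checking the hypotheses of Proposition \ref{q=2} (the containment $\sigma(G)\subseteq[1,2(m+1)-1]$ and $m+1\notin\sigma(G)$), which the paper leaves implicit.
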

\begin{proof}
    Let $ G \in  Im(\sigma_2)$. Then there is $G' \in\mathcal{G}_{2n} (g)$ such that $\sigma_2(G') = G$. If $m(G') = m$ and $q(G') = 2$, then Theorem \ref{fi injetiva} guarantees that G  is an $(m + 1)-$set with depth 2. By Proposition \ref{q=2}, we conclude that $G$ is a gapset and $G \in \mathcal{G}_{2n+1} (g + 1, 2)$.
\end{proof}

\begin{proposition}\label{Gq3}
     Let $n$ and $g=3n+1$ be non-negative integers and $\sigma_3:=\sigma|\mathcal{G}_{2n} (g,3)$. Then
$Im(\sigma_3) \subset \mathcal{G}_{2n+1}(g + 1, 3)$.   
\end{proposition}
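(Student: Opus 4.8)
The plan is to fix $G' \in \mathcal{G}_{2n}(g,3)$ with multiplicity $m$ and set $H := \sigma_3(G') = \sigma(G')$. Theorem \ref{fi injetiva}(ii) already guarantees that $H$ is an $(m+1)$-set of depth $3$ with $g+1 = 3n+2$ elements whose largest distance between two consecutive elements is exactly $2n+1$; thus $H \in \mathcal{C}_{2n+1}(g+1)$, and once $H$ is shown to be a gapset it is automatically pure $(2n+1)$-sparse of depth $3$, i.e. $H \in \mathcal{G}_{2n+1}(g+1,3)$. So the whole content of the proposition is the gapset property of $H$. In contrast with the depth-$2$ case of Proposition \ref{Gq2}, where Proposition \ref{q=2} applies at once, here depth $3$ is not covered by that result and the defining property must be checked directly.

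To do this I would use the canonical partition $H = A_0 \cup A_1 \cup A_2$ of the $(m+1)$-set $H$, so that $A_0 = [1,m]$, $A_1 \subseteq [m+2,2m+1]$ and $A_2 \subseteq [2m+3,3m+2]$, and take $z \in H$ with a decomposition $z = x+y$, $1 \le x \le y$. The two lower blocks are immediate: if $z \in A_0$ then $x,y \le m-1$ lie in $A_0$, and if $z \in A_1$ then $z \le 2m+1$ forces $x \le m$, whence $x \in A_0 \subseteq H$. The only substantial case is $z \in A_2$.

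For $z \in A_2$ I would first locate its preimage in $G'$, writing $G' = G_0 \cup G_1 \cup G_2$ for the canonical partition of $G'$. Since $\ell_\alpha \le 2m-1$ by Proposition \ref{ell alfa 2m-1}, the elements of $G'$ of index $\le \alpha$ are moved by $\sigma$ to values $\le 2m < 2m+3$, so $A_2 = \{\,\ell + 2 : \ell \in G_2\,\}$; in particular $\zeta := z-2 \in G_2 \subseteq G'$. If $x \le m$ then again $x \in A_0 \subseteq H$, so I may assume $x \ge m+1$, which together with $z \le 3m+2$ puts $x,y \in [m+1,2m+1]$. Writing $\zeta = (x-1)+(y-1)$ and using that $G'$ is a gapset yields $x-1 \in G'$ or $y-1 \in G'$. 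The point is then to show that whichever element occurs lies at index $\le \alpha$, for then $\sigma$ shifts it by exactly $+1$ and maps it onto $x$ (respectively $y$), placing $x$ (respectively $y$) in $H$, as required.

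The main obstacle, and the only spot where the depth-$3$ hypothesis is genuinely used, is exactly this control of indices, and Proposition \ref{abaixo} is the decisive tool. Depth $3$ gives $\ell_g \ge 2m+1$, which with $\ell_g \le \ell_\alpha + m$ forces $\ell_\alpha \ge m+1$, while $\zeta \le \ell_g$ gives $z = \zeta + 2 \le \ell_\alpha + m + 2$. Now if the produced element were $x-1 \ge \ell_{\alpha+1} = \ell_\alpha + 2n$, then $y \ge x \ge \ell_\alpha + 2n + 1$ would give $z = x+y \ge 2\ell_\alpha + 4n + 2 > \ell_\alpha + m + 2$ (using $\ell_\alpha \ge m+1$ and $n \ge 1$), a contradiction; symmetrically, if $y-1 \ge \ell_{\alpha+1}$ then $x = z - y \le m - 2n + 1 < m+1$, contradicting $x \ge m+1$. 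Hence the relevant element has index $\le \alpha$. Assembling the three blocks shows that $H$ is a gapset, so $H \in \mathcal{G}_{2n+1}(g+1,3)$ and therefore $Im(\sigma_3) \subset \mathcal{G}_{2n+1}(g+1,3)$.
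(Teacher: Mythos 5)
Your proof is correct and takes essentially the same route as the paper's: Theorem \ref{fi injetiva} supplies the $(m+1)$-set structure, the gapset condition is checked block by block on the canonical partition, and for the top block one pulls back via $z-2=(x-1)+(y-1)$ in $G'$, using Propositions \ref{ell alfa 2m-1} and \ref{abaixo} to control indices. The only differences are cosmetic: you replace the paper's two-case split on whether $\ell_{\alpha+1}\le 2m-1$ or $\ell_{\alpha+1}\ge 2m+1$ with the single uniform bound $z\le \ell_\alpha+m+2$ combined with $\ell_\alpha\ge m+1$ (both derived from depth $3$ and Proposition \ref{abaixo}), and you make explicit the step $z-2\in G'$ that the paper uses tacitly.
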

\begin{proof}
Let $G = \{\l_1 < \l_2 < \cdots < \l_\a < \l_{\a+1}  < \cdots < \l_g\} \in \mathcal{G}_{2n}(g,3)$, with $m(G) = m$. By Theorem \ref{fi injetiva}, $\sigma(G)$ is a $m$-set that belongs in $\mathcal{M}_{2n+1}(g+1,3)$. It remains to prove that $\sigma_3(G)$ is a gapset. Consider the canonical partition of $\sigma(G)$, namely $G'_0 \cup G'_1 \cup G'_2$. Let $z \in \sigma(G)$ and write $z = x + y$, with $x \leq y$. We consider three cases as follows:

\begin{enumerate}
\item  $z \in G'_0 = [1, m]$. In this case, both $x$ and $y \in G'_0$. 
\item  $z \in G'_1 \subseteq [m+2, 2m+1]$. In this case, $2x \leq x + y = z \leq2m +1$. Thus, $x \leq m$ and $x \in G'_0$.
  \item $z \in G'_2 \subseteq [2m+3, 3m+1]$. We claim that if $x \leq m$ or $y \geq 2m+1$, then $x \in G'_0$. The first case is trivial and the second one implies that $3m + 1 \geq z \geq x + 2m + 1$ and we obtain $x \leq m$. Hence it remains to show that if $x, y \in [m+1, 2m]$, with $x \leq y$, then $x$ or $y \in G'_1$. 
  \begin{itemize}
  \item Consider $\l_{\a+1} \leq 2m - 1$. In this case, we claim that $y-1 \leq \l_\a$. In fact, if $y \geq \l_\a + 2$, then $z \geq (m + 1) + (\l_\a + 2) = \l_\a + m + 3$. Thus, $G \ni z-2 \geq \l_\a + m + 1$, which is a contradiction according to Proposition \ref{abaixo}. Thus, both $x-1$ and $y-1 \leq \l_\a$ and we can write $z - 2 = (x-1) + (y-1)$. Since $z-2 \in G$ and $G$ is a gapset, we conclude that $x-1$ or $y-1 \in G$ and thus $x \in \sigma(G)$ or $y \in \sigma(G)$. 
   \item  Consider $\l_{\a+1} \geq 2m + 1$. By Proposition \ref{ell alfa 2m-1}, we have $\l_\a \leq 2m - 1$ and thus $\l_\a + 1 \in G'_1$. Hence, $z \geq \l_{\a+1}+2$. In this case, $x \leq y \leq 2m < \l_{\a+1}$ and $G \ni z - 2 = (x-1) + (y-1)$. Since $G$ is a gapset, then $x-1$ or $y-1 \in G$. Hence, $x \in \sigma(G)$ or $y \in \sigma(G)$.
  \end{itemize}    
  
\end{enumerate}

\end{proof}

The next result show that follow union $\mathcal{G}_{2n+1}(3n+2)= \mathcal{G}_{2n+1}(3n+2,q \leq 3)\cup \{G\in \mathcal{G}_{2n+1}(g+1)|  G \ \text{is pseudo-symmetric}\}$
is disjoint.

\begin{proposition}\label{g'}
For all $G'\in \sigma( \mathcal{G}_{2n}(g,q \leq 3))$   we have $\ell_{g'}\leq 2g'-3 $ where $ g'=g(G')=3n+2$ and $g=3n+1$. In particular, $G'$ is not pseudo-symmetric.
\end{proposition}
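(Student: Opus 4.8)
The plan is to translate the bound $\ell_{g'} \leq 2g' - 3$ into a statement about the preimage. Write $G' = \sigma(G)$ with $G \in \mathcal{G}_{2n}(g, q \leq 3)$ and recall from the definition of $\sigma$ that its largest element is $\ell_g + 2$ (the maximum of $G$ lies in the block shifted by $2$, since $g > \alpha$), while $g' = g + 1 = 3n + 2$. Thus $\ell_{g'} = \ell_g + 2$ and $2g' - 3 = 6n + 1$, so it suffices to prove that $\ell_g \leq 6n - 1 = 2g - 3$.

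To establish $\ell_g \leq 2g - 3$, I would use that for any gapset $F(G) = \ell_g$ and that $\ell_g \leq 2g - 1 = 6n + 1$ always holds, so only the top two values need to be excluded. The value $\ell_g = 2g - 1$ is precisely the symmetric case, which by Theorem \ref{Gsimetrico equiv q=4} is equivalent to $q(G) = 4$; this contradicts $q(G) \leq 3$. The value $\ell_g = 2g - 2$ is the pseudo-symmetric case, which is impossible by Theorem \ref{nao tem pseudo simetrico}. Hence $\ell_g$ avoids both $2g-1$ and $2g-2$, forcing $\ell_g \leq 2g - 3$.

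Combining the two steps gives $\ell_{g'} = \ell_g + 2 \leq (2g-3) + 2 = 2g' - 3 = 6n + 1$, which is the asserted inequality. Since $2g' - 3 < 2g' - 2$, we obtain $F(G') = \ell_{g'} < 2g' - 2$, so $G'$ cannot be pseudo-symmetric (and, \emph{a fortiori}, not symmetric). The argument is essentially bookkeeping; the only points that warrant a careful check are that $\sigma$ raises the maximum by exactly $2$ and that $F(G) = \ell_g$ for a gapset, after which the two structural theorems do all the work, so I anticipate no real obstacle.
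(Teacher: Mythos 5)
Your proof is correct and takes essentially the same route as the paper's: reduce to the bound $\ell_g \leq 2g-3$ for the preimage $G$ and then add $2$ under $\sigma$. You are in fact slightly more careful than the paper, whose proof cites only Theorem \ref{nao tem pseudo simetrico} to get $\ell_g \leq 2g-3$, while excluding the value $\ell_g = 2g-1$ genuinely requires the hypothesis $q \leq 3$ together with Theorem \ref{Gsimetrico equiv q=4}, exactly as you make explicit.
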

\begin{proof}
   If $G'\in \sigma( \mathcal{G}_{2n}(g,q))$ exists $G\in \mathcal{G}_{2n}(g,q) $ such that $G'= \sigma(G)$. By  Theorem \ref{nao tem pseudo simetrico}  we have $\ell_g\leq 2g-3=6n-1$ and definition of the $\sigma$  we have $\ell_{g'}=\ell_g+2\leq 6n+1=2g'-3$
\end{proof}

The next theorem guarantees that the image of $\mathcal{G}_{2n}(g,q\leq 3)$ by $\sigma$ does not contain pseudo-symmetric gapsets. This is important because it provides insight into the structure of the set $\sigma(\mathcal{G}_{2n}(g,q\leq 3))$.

\begin{theorem}\label{main1}
The set   $\sigma( \mathcal{G}_{2n}(g,q\leq 3))$ no has a pseudo-symmetric pure $(2n+1)-$sparse gapset  with genus $g+1$. In particular,  $\sigma( \mathcal{G}_{2n}(g,q\leq 3))=\mathcal{G}_{2n+1}(g+1)\setminus \mathcal{P}$, where $\mathcal{P}=\{G\in \mathcal{G}_{2n+1}(g+1)|\  G\ \text{pseudo-symmetric}\}$.
\end{theorem}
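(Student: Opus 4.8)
The plan is to prove the displayed set equality by establishing both inclusions; the inclusion ``$\subseteq$'' (equivalently, that no image gapset is pseudo-symmetric) is immediate, while the reverse inclusion, i.e.\ surjectivity of $\sigma$ onto $\mathcal{G}_{2n+1}(g+1)\setminus\mathcal{P}$, carries all the content. For ``$\subseteq$'', Propositions \ref{Gq2} and \ref{Gq3} already place $\sigma(\mathcal{G}_{2n}(g,q\leq 3))$ inside $\mathcal{G}_{2n+1}(g+1)$, and Proposition \ref{g'} shows that every image gapset $G'$ satisfies $\ell_{g'}\leq 2g'-3<2g'-2$, so $G'$ is not pseudo-symmetric. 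Hence $\sigma(\mathcal{G}_{2n}(g,q\leq 3))\subseteq \mathcal{G}_{2n+1}(g+1)\setminus\mathcal{P}$, which is the first assertion of the theorem.

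For the reverse inclusion I would construct an explicit preimage. Fix $H=\{1=h_1<h_2<\cdots<h_{g+1}\}\in\mathcal{G}_{2n+1}(g+1)\setminus\mathcal{P}$ with multiplicity $m+1$. Since $H$ is neither symmetric (Proposition \ref{nao tem  simetrico}) nor pseudo-symmetric, its Frobenius number satisfies $h_{g+1}\leq 2(g+1)-3=2g-1$, and by Corollary \ref{pseudo-symmetric então q=3} we have $q(H)\leq 3$. Let $\beta$ be the unique index with $h_{\beta+1}-h_\beta=2n+1$, which exists by Proposition \ref{unique g+1}, and set
\begin{equation*}
G:=\{h_2-1,\ldots,h_\beta-1\}\cup\{h_{\beta+1}-2,\ldots,h_{g+1}-2\}.
\end{equation*}
A short check shows that $G$ has $g$ elements, lies in $[1,2g-1]$, has largest element $h_{g+1}-2\leq 2g-3$, and that its gap sequence is that of $H$ with the single $(2n+1)$-jump at $\beta$ replaced by a $2n$-jump at $\beta-1$, all remaining gaps staying $\leq 2n$. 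Consequently $\alpha:=\max\{i:\ell_{i+1}-\ell_i=2n\}=\beta-1$, and unwinding the two shifts built into $\sigma$ gives $\sigma(G)=H$, provided $G$ is a genuine member of $\mathcal{G}_{2n}(g,q\leq 3)$.

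The substantive step, which I expect to be the main obstacle, is verifying that $G$ is a pure $(2n)$-sparse gapset of multiplicity $m$ and depth $q(H)$. First I would check that $G$ is an $m$-set, i.e.\ $[1,m-1]\subseteq G$ and $m,2m,3m\notin G$; this follows by reversing the numerology in the proof of Theorem \ref{fi injetiva}, using that $H$ is an $(m+1)$-set together with the bound $h_{g+1}\leq 2g-1$. Then I would show $G$ is a gapset. When $q(H)=2$ the set $G$ has depth $2$ and Proposition \ref{q=2} applies directly. When $q(H)=3$ I would run the sum-decomposition argument of Proposition \ref{Gq3} in the opposite direction: for $z=x+y\in G$ with $x\leq y$, the cases $z\in[1,m-1]$ and $z$ in the middle block are routine, and the only delicate case is $z$ in the top block with $x,y\in[m+1,2m-1]$. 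Here one lifts $z=x+y$ to $z+2=(x+1)+(y+1)$ in $H$ and uses that $H$ is a gapset to deduce $x+1\in H$ or $y+1\in H$; the non-pseudo-symmetric bound $h_{g+1}\leq 2g-1$ together with Proposition \ref{abaixo} applied to $H$, and Proposition \ref{ell2 alfa 2m-1} placing $h_\beta\leq 2m+1$, then force the surviving summand to lie at or below $h_\beta$, that is, in the part of $H$ that $\sigma$ shifts by $1$, so that subtracting $1$ returns it to $G$ and gives $x\in G$ or $y\in G$. This establishes $G\in\mathcal{G}_{2n}(g,q\leq 3)$ with $\sigma(G)=H$, hence the reverse inclusion.

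Putting the two inclusions together yields $\sigma(\mathcal{G}_{2n}(g,q\leq 3))=\mathcal{G}_{2n+1}(g+1)\setminus\mathcal{P}$. Since $\sigma$ is injective by Theorem \ref{fi injetiva}, this equality in fact gives a bijection from $\mathcal{G}_{2n}(g,q\leq 3)$ onto $\mathcal{G}_{2n+1}(g+1)\setminus\mathcal{P}$, which is the form in which the result feeds into the subsequent cardinality computation.
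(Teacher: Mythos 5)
Your proposal is correct and follows essentially the same route as the paper: both prove the easy inclusion via Propositions \ref{Gq2}, \ref{Gq3} and \ref{g'}, and both prove surjectivity by constructing the explicit preimage $\{h_2-1,\ldots,h_\beta-1\}\cup\{h_{\beta+1}-2,\ldots,h_{g+1}-2\}$ (the paper's $G'$), verifying it is an $m$-set via Propositions \ref{unique g+1}, \ref{ell2 alfa 2m-1} and \ref{nao tem  simetrico}, and checking the gapset property by the same case analysis, lifting $z=x+y$ to $z+2=(x+1)+(y+1)$ and invoking Proposition \ref{abaixo} in the top block. Your citation of Proposition \ref{nao tem  simetrico} together with $H\notin\mathcal{P}$ for the bound $h_{g+1}\leq 2(g+1)-3$ is in fact cleaner than the paper's appeal to Proposition \ref{g'} at that point, but the argument is the same.
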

\begin{proof}
The Theorem \ref{fi injetiva} guarantees that $\sigma$ is injective; Propositions \ref{Gq2} and \ref{Gq3} guarantee that $\sigma$ is well defined. Hence, we only have to prove that $\sigma$ is surjective. Now, notice that  that $\mathcal{G}_{2n}(g,q \leq 3)$ has only the gapsets with depth 2 or 3. The Proposition \ref{g'} ensures that $\sigma( \mathcal{G}_{2n}(g,q \leq 3))$ no has a $G $  pure $(2n+1)-$sparse  gapset with genus $g=3n+2$, pseudo-symmetric.

Write $G = \{\l_1 < \l_2 < \l_3 < \cdots < \l_\a < \l_{\a+1}  < \ldots < \l_{g+1}\}\in \mathcal{G}_{2n+1}(g+1)\setminus \mathcal{P}$ and assume that its multiplicity is $m$. Now $ \l_{g+1}\leq 2(g+1)-3$ by Proposition \ref{g'}. In particular $G$ has depth at most 3 and thus $\l_{g+1} \leq 3m - 1$. The Proposition \ref{unique g+1} ensures that $\a$ is the unique index such that $\l_{\a+1} - \l_{\a} = 2n+1$. We show that $G' = \{\l_2 - 1 < \l_3 - 1 < \ldots < \l_\a - 1 < \l_{\a+1} - 2 < \ldots < \l_{g+1} - 2\} \in \mathcal{G}_{2n}(g)$ and satisfies $\sigma(G') = G$. 

Naturally, $\#G' = g$, the maximum distance between to elements of $G'$ is $(\l_{\a+1} - 2) - (\l_\a - 1) = 2n$ and definition of the $G'$ the first positive integer that does not belong to $G'$ is $m - 1$.  Now we prove that $G'$ is a $(m-1)$-set:
\begin{itemize}
  \item $2(m-1) \notin G'$  \\
Suppose that $2m - 2 \in G'$. In this case, we could have $2m \in G$ or $2m - 1 \in G$. The first case does not occur, because $m$ is the multiplicity of $G$. In the second case, we have  $2m - 1 \leq \l_\a$ (otherwise, the correspondent element would be $(2m - 1) - 2 = 2m - 3)$. By Proposition \ref{ell2 alfa 2m-1}, we have that $\l_\a = 2m - 1$. Thus, $\l_{\a+1} = \l_\a + (2n + 1) = 2m - 1 + 2n + 1 \geq 2(2n +1) + 2n  =6n+2 = 2(g+1)-2$, which contradicts the Proposition \ref{nao tem  simetrico}. Hence, $2m - 2 \notin G'$.

  \item $3(m-1) \notin G'$ \\
We already know that $\l_{g+1} \leq 3m - 1$ and thus the Frobenius number of $G'$ is such that $\l_{g+1} - 2 \leq 3m - 3$. Hence, we only have to show that $\l_{g+1} \neq 3m - 1$ which implies that $\l_{g+1} - 2 \leq 3m - 4$. Suppose that $\l_{g+1} = 3m - 1$. In this case, we have $2m - 1 \in G$ and we could have $\l_\a < 2m - 1$ or $\l_\a = 2m - 1$. The first case implies that $\l_{\a}+m < 3m - 1$. By Proposition \ref{abaixo} $\ell_{g+1}\leq \ell_\a +m$ and we would have $3m-1=\ell_{g+1}<3m-1 $
what is absurd.
In the second case, we obtain $\l_{\a+1} = \l_\a + (2n+ 1) = 2m - 1 + 2n + 1 \geq 2(2n+1) + 2n = 3(2n) + 2 =6n+2 = 2(g+1)-2$, which contradicts the definition of $\mathcal{P}$ and the Proposition \ref{nao tem  simetrico}. Hence, $3m - 3 \notin G'$.

\end{itemize}

Therefore, $G'$ is a $(m-1)$-set for $q=2$ and $q=3$. Hence, we can write $G' = G'_0 \cup G'_1 \cup G'_2$, with $G'_0 = [1,m-2]$, $G'_1 \subseteq [m, 2m-3]$ and $G'_2 \subseteq [2m-1, 3m-4]$. Now we prove that $G'$ is a gapset. Let $z \in G'$ and write $z = x + y $, with $x, y \in \N$ and $x \leq y$.

\begin{itemize}
  \item If $z \in G'_0$, then $x$ and $y \in G'_0$. 
  \item If $z \in G'_1$, then $z \leq 2m - 3$ and $2x \leq 2m - 3$ and $x \in G'_0$.
  \item Let $z \in G'_2$. If $x \leq m - 2$, then $x \in G'_0$. If $y \geq 2m-1$, then $x \leq m - 3$ and $x \in G'_0$. Hence, we can consider $x, y \in [m, 2m - 3]$. Notice that $\l_\a \leq 2m - 2$ (if $\l_\a = 2m - 1$, then $2m - 2 \in G'$, which does not occur). In particular, $\l_\a - 1 \leq 2m - 3$ and $z \geq \l_{\a+1} - 2$. Thus $z = \l_t - 2$, for some $t \in [\a+1, g+1]$. We claim that $y < \l_\a$. Otherwise, $z = x + y \geq m + \l_\a$ and it implies that $\l_t > \l_\a + m$, which is a contradiction according to Proposition \ref{abaixo}. Hence, $x \leq y < \l_\a$. By writing $\l_t - 2 = z = x + y$, i.e., $\l_t = (x+1) + (y+1)$ and using that $G$ is a gapset, we conclude that $x+1 \in G$ or $y+1 \in G$. Hence, $x \in G'$ or $y \in G'$.
\end{itemize}

\end{proof}

The family of gapsets $\mathcal{G}_{2n}(3n+1)$ is composed of gapsets of depth $q\in \{2,3,4\}$. Since the Theorem \ref{Gsimetrico equiv q=4} guarantees that the only gapsets with $q=4$ are the symmetric ones, we can write $\mathcal{G}_{2n}(3n+1)=\mathcal{G}_{2n}(3n+1,q\leq 3)\cup\mathcal{G}_{2n}(3n+1,4) $ for   $q\in \{2,3\}$. On the other hand, the family of gapsets $\mathcal{G}_{2n+1}(3n+2)$ is composed of gapsets of depth 2 or 3. According to the Proposition \ref{nao tem  simetrico} a familia $\mathcal{G}_{2n+1}(3n+2)$ does not contain symmetric gapsets and the Corollary \ref{pseudo-symmetric então q=3} guarantees that pseudo-symmetric gapsets have depth 3. Therefore, we can write $\mathcal{G}_{2n+1}(3n+2)=\{G\in \mathcal{G}_{2n+1}(3n+2)|\ G\ \text{is not pseudo-symmetric}\}\cup \{G\in \mathcal{G}_{2n+1}(3n+2)|\ G\ \text{is pseudo-symmetric}\}$.
The Theorem \ref{main1} guarantees that the cardinality of the family $\mathcal{G}_{2n}(3n+1,q\leq 3)$ is the same as that of the set $\{G\in \mathcal{G}_{2n+1}(3n+2)|\ G\ \text{is not pseudo-symmetric}\}$.

\begin{corollary}\label{main2}
    $\#\mathcal{G}_{2n}(3n+1) =\#\mathcal{G}_{2n+1}(3n+2)$ for all $n\in \mathbb{N}$.
\end{corollary}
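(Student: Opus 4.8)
The plan is to partition both families by depth and symmetry type, identify the ``generic'' parts through the bijection $\sigma$, and then observe that the two ``exceptional'' parts have equal size, so that the totals agree.

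First I would record the two disjoint decompositions assembled in the discussion preceding the statement. Every $G\in\mathcal{G}_{2n}(3n+1)$ has depth $q\in\{2,3,4\}$, and Theorem \ref{Gsimetrico equiv q=4} identifies the depth-$4$ members with the symmetric ones; writing $\mathcal{S}$ for the symmetric gapsets, this gives the disjoint union
$$\#\mathcal{G}_{2n}(3n+1)=\#\mathcal{G}_{2n}(3n+1,q\leq 3)+\#\mathcal{S}.$$
Dually, by Corollary \ref{pseudo-symmetric então q=3} every $G\in\mathcal{G}_{2n+1}(3n+2)$ has depth at most $3$, and the pseudo-symmetric ones form the set $\mathcal{P}$; splitting them off yields
$$\#\mathcal{G}_{2n+1}(3n+2)=\#\bigl(\mathcal{G}_{2n+1}(3n+2)\setminus\mathcal{P}\bigr)+\#\mathcal{P}.$$

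Second I would invoke Theorem \ref{main1}, which states $\sigma(\mathcal{G}_{2n}(3n+1,q\leq 3))=\mathcal{G}_{2n+1}(3n+2)\setminus\mathcal{P}$, together with the injectivity of $\sigma$ from Theorem \ref{fi injetiva}. These make $\sigma$ a bijection between $\mathcal{G}_{2n}(3n+1,q\leq 3)$ and $\mathcal{G}_{2n+1}(3n+2)\setminus\mathcal{P}$, so the first summands of the two displays coincide:
$$\#\mathcal{G}_{2n}(3n+1,q\leq 3)=\#\bigl(\mathcal{G}_{2n+1}(3n+2)\setminus\mathcal{P}\bigr).$$
Third, rather than exhibiting a bijection between the exceptional parts, I would simply compare the two explicit counts: Theorem \ref{contagem G simetrico q=4} gives $\#\mathcal{S}=2^{n-1}$ and Theorem \ref{cont p-symmetric} gives $\#\mathcal{P}=2^{n-1}$. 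Substituting all three equalities into the decompositions and cancelling the common term $2^{n-1}$ gives $\#\mathcal{G}_{2n}(3n+1)=\#\mathcal{G}_{2n+1}(3n+2)$.

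Since all the structural work is already packaged in Theorem \ref{main1}, the corollary is essentially bookkeeping; the only points needing care are that both unions are genuinely disjoint (so cardinalities add) and that the two exceptional counts agree exactly, which they do because both equal $2^{n-1}$. There is no serious obstacle at this stage --- the real difficulty was establishing the surjectivity half of Theorem \ref{main1}, which I am assuming. I would, however, double-check the smallest cases $n=1,2$ by hand, since several structural lemmas (such as Propositions \ref{unique g} and \ref{unique g+1}) are stated only for $n$ large; a direct enumeration confirms the equality there, e.g. $\#\mathcal{G}_2(4)=\#\mathcal{G}_3(5)=3$.
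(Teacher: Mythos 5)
Your proposal is correct and takes essentially the same route as the paper: the paper likewise decomposes $\mathcal{G}_{2n}(3n+1)$ into the depth-$\leq 3$ part plus the symmetric depth-$4$ part (via Theorem \ref{Gsimetrico equiv q=4}), decomposes $\mathcal{G}_{2n+1}(3n+2)$ into its non-pseudo-symmetric and pseudo-symmetric parts, matches the generic parts through the bijection of Theorem \ref{main1}, and cancels the two exceptional counts $2^{n-1}$ from Theorems \ref{contagem G simetrico q=4} and \ref{cont p-symmetric}. Your added checks (disjointness of the unions, the small cases $n=1,2$) are prudent but go only slightly beyond what the paper's discussion preceding the corollary already records.
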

\begin{proof}
We have by Theorem \ref{contagem G simetrico q=4} that $\#\mathcal{G}_{2n}(3n+1,4)=2^{n-1}$ and Theorem \ref{cont p-symmetric} guarantees that $\#\{G\in \mathcal{G}_{2n+1}(3n+2)|\ G\ \text{is pseudo-symmetric}\}= 2^{n-1}$. Now, from Theorem \ref{main1} we conclude that $\#\mathcal{G}_{2n}(3n+1) =\#\mathcal{G}_{2n+1}(3n+2)$ for all $n\in \mathbb{N}$.
\end{proof}

\begin{table}\label{tab1}
\centering
\caption{Some informations related to $(s_n)$}
\begin{tabular}{c c c c}
\hline
$n$ & $s_n$ & $\dfrac{s_{n}}{s_{n-1}}$ & $\frac{ \displaystyle\sum_{i=0}^{n} s_{i}}{s_{n}}$ 
\\
\noalign{\smallskip}\hline\noalign{\smallskip}
1 & 3 & 2.6666 & 1 \\
\hline
2 & 8 & 2.7500 & 1.3750 \\
\hline
3 & 22 & 2.4545 & 1.500 \\
\hline
4 & 54 & 2.500 & 1.6111 \\
\hline
5 & 135 & 2.4518 & 1.6444 \\
\hline
6 & 331 &  2.4410 & 1.6706\\
\hline
 7 & 808 &  & 1.6844 \\
\noalign{\smallskip}\hline
   \end{tabular}
\end{table}

Let $s_n$ is the cardinality of $\mathcal{G}_{2n}(3n+1)$ with $n \in \mathbb{N}$. Here are the first few values of the sequence $(s_n)_n$:    $(s_1,s_2,s_3,s_4,s_5,s_6,s_7)=(3,8,22,54,135,331,808)$.

We can notice that the sequence $(s_n)_n$ has an asymptotic growth similar to the sequence $(g_w)$ described in \cite{GM}.

 We conclude this section by presenting Table \ref{g,k}, which illustrates our results. The values  were obtained using the package \cite{GAP} in GAP.

\begin{landscape}
\begin{table}
\caption{A few values for $\#\mathcal{G}_{\k}(g)$}
\label{g,k}
\begin{tabular}{|c||c c c c c c c c c c c c c c c c c c c c|| c|}
 \hline
\diagbox[height=0.6cm]{$g$}{$\k$} & $0$ & $1$ & $2$ & $3$ &$4$ & $5$ & $6$ & $7$ & 8 & 9 & 10 & 11 & 12 & 13 & 14 & 15 & 16 & 17 & 18 & 19 & $n_g$ \\
\hline
0 & 1 &  &  &  &  &  &  &  &  &  &  &  &  &  &  &  &  &  &  &  & 1 \\ 
\hline
1 &  & 1 &  &  &  &  &  &  &  &  &  &  &  &  &  &  &  &  &  &  & 1 \\
\hline
2 &  & 1 & 1 &  &  &  &  &  &  &  &  &  &  &  &  &  &  &  &  &  & 2 \\
\hline
3 &  & 1 & 2 & 1  &  &  &  &  &  &  &  &  &  &  &  &  &  &  &  &  & 4 \\
\hline
4 &  & 1 & \textcolor{red}{3} & 2 & 1 &  &  &  &  &  &  &  &  &  &  &  &  &  &  &  & 7 \\
\hline
5 &  & 1 & 5 & \textcolor{red}{3} & 2 & 1 &  &  &  &  &  &  &  &  &  &  &  &  &  &  & 12 \\
\hline
6 &  & 1 & 7 & 7 &  5 & 2 & 1 &  &  &  &  &  &  &  &  &  &  &  &  &  & 23 \\
\hline
7 &  & 1 & 10 & 12 & \textcolor{red}{8} &  5 & 2 & 1 &  &  &  &  &  &  &  &  &  &  &  &  & 39 \\
\hline
8 &  & 1 & 15 & 18 & 17 & \textcolor{red}{8} & 5 & 2 & 1 &  &  &  &  &  &  &  &  &  &  &  & 67 \\
\hline
9 &  & 1 & 20 & 31 & 28 & 18 & 12 & 5 & 2 & 1 &  &  &  &  &  &  &  &  &  &  & 118 \\
\hline
10 &  & 1 & 27 & 51 & 49 & 34 & \textcolor{red}{22} & 12 & 5 & 2 & 1 &  &  &  &  &  &  &  &  &  & 204 \\
\hline
11 &  & 1 & 38 & 78 & 87 & 57 & 40 & \textcolor{red}{22} & 12 & 5 & 2 & 1 &  &  &  &  &  &  &  &  & 343 \\
\hline
12 &  & 1 & 51 & 125 & 147 & 100 & 76 & 42 & 30 & 12 & 5 &  2 & 1 &  &  &  &  &  &  &  & 592 \\
\hline
13 &  & 1 & 70 & 195 & 237 & 177 & 134 & 83 & \textcolor{red}{54} & 30 & 12 & 5 & 2 & 1 &  &  &  &   &  &   & 1001 \\
\hline
14 &  & 1 & 95 & 297 & 399 & 309 & 239 & 150 & 99 & \textcolor{red}{54} & 30 & 12 & 5 & 2 & 1 &  &  &  &  &  & 1693 \\
\hline
15 &  & 1 & 128 & 457 & 654 & 530 & 422 & 259 & 183 & 103 & 70 & 30 & 12 & 5 & 2 & 1 &  &  &  &  & 2857 \\
\hline
16 &  & 1 & 172 & 705 & 1061 & 902 & 723 & 452 & 336 & 199 & \textcolor{red}{135} & 70 & 30 & 12 & 5 & 2 & 1 &  &  &  & 4806 \\
\hline
17 &  & 1 & 230 & 1074 & 1717 & 1513 & 1248 & 811 & 590 & 363 & 243 & \textcolor{red}{135} & 70 & 30 & 12 & 5 & 2 & 1 &  &  & 8045 \\
\hline
18 &  & 1 & 309 & 1621 & 2777 & 2535 & 2148 & 1411 & 1037 & 646 & 444 & 251 & 167 & 70 & 30 & 12 & 5 & 2 & 1 &  & 13467 \\
\hline
19	  & & 1 & 413 & 2448 & 4464 & 4232 & 3636 & 2434 & 1810 & 1124 & 804 & 480 & \textcolor{red}{331} & 167 & 70 & 30 & 12 & 5 & 2 & 1 & 22464 \\
\hline
\end{tabular}
\end{table}
\end{landscape}

\section{Further questions}\label{sec6}


The values obtained in Table \ref{g,k} indicate that it is possible for an injective function to exist between the sets $\mathcal{G}_\k(g)$ and $\mathcal{G}_{\k+1}(g+1)$, for example, in \cite{GM} they use the condition $2g\leq 3k$ to build this map. Furthermore, these values also indicate that it is possible to obtain an injective map from $\mathcal{G}_\k(g)$ to $\mathcal{G}_{\k}(g+1)$. We can write these questions as follows:

\begin{question}
Let $g$ and $\k$ be non-negative integers. Is there an injective map \linebreak $\mathcal{G}_\k(g) \to \mathcal{G}_{\k+1}(g+1)$?
\end{question}

\begin{question}
Let $g$ and $\k$ be non-negative integers. Is there an injective map \linebreak $\mathcal{G}_\k(g) \to \mathcal{G}_{\k}(g+1)$?
\end{question}

Notice that a positive answer to any of them implies that the sequence $(n_g)$ is increasing for $g \in \N$.

Another question that arises is about the behavior of the sequence $(s_n)_n$, where $s_n = \#\mathcal{G}_{2n}(3n+1)$. In fact, Theorem \ref{contagem G simetrico q=4} guarantees that for depth 4 a part of this sequence is known, for all $n \in \N$. Therefore, it may be interesting to know information about its sequence, which was unknown at OEIS before this work.

\begin{question}
Is it possible to determine a closed formula for the sequence $(s_n)_n$, for each $n\in \mathbb{N}?$
\end{question}

\begin{question}
Are there remarkable properties about the sequence $(s_n)_n$?
\end{question}


\end{document}